\numberwithin{equation}{section}
\numberwithin{subsection}{section}
\theoremstyle{plain}
\newtheorem{thm}{Theorem}[section]
\newtheorem{cor}[thm]{Corollary}
\newtheorem{lem}[thm]{Lemma}
\newtheorem{prop}[thm]{Proposition}
\theoremstyle{definition}
\newtheorem{defn}[thm]{Definition}
\theoremstyle{remark}
\newcommand{\norm}[1]{\left\Vert#1\right\Vert}
\newcommand{\abs}[1]{\left\vert#1\right\vert}
\newcommand{\set}[1]{\left\{#1\right\}}
\newcommand{\R}{\mathbb R}
\newcommand{\N}{\mathbb N}
\newcommand{\C}{\mathbb C}
\newcommand{\Z}{\mathbb Z}
\newcommand{\diverg}{\operatorname{div}}
\newcommand{\T}{\mathbb T}
\newcommand{\curl}{\operatorname{curl}}
\title[Strong Convergence of Vorticity and Existence of Strong Solutions]{2D Navier--Stokes with Navier Slip: Strong Vorticity Convergence and Strong Solutions for Unbounded Vorticity}
\author{Josef Demmel}
\address{Department of Mathematics, Friedrich-Alexander-Universit\"at Erlangen-N\"urnberg, Cauerstr.~11, 91058 Erlangen, Germany}
\email{josef.demmel@fau.de}
\author{Emil Wiedemann}
\address{Department of Mathematics, Friedrich-Alexander-Universit\"at Erlangen-N\"urnberg, Cauerstr.~11, 91058 Erlangen, Germany}
\email{emil.wiedemann@fau.de}
\date{\today}
\keywords{Euler equations, Navier-Stokes equations, Navier boundary conditions, vanishing viscosity}
\subjclass[2020]{35Q30, 35Q31, 35Q35, 75D05}
\begin{document}

\begin{abstract}
    We analyze the two-dimensional incompressible Navier-Stokes equations on a smooth, bounded domain with Navier boundary conditions. Starting from an initial vorticity in $L^p$ with $p>2$, we show strong convergence of the vorticity in the vanishing viscosity limit. We utilize a purely interior framework from Seis, Wiedemann, and Wo\'{z}nicki, originally derived for no-slip, and upgrade local to global convergence. Under the same assumptions, we also show that the velocity is in fact a strong solution and satisfies the Navier slip conditions for any positive time. The key idea is to study the Laplacian subject to Navier boundary conditions and prove that this boundary-value problem is elliptic in the sense of Agmon--Douglis--Nirenberg.
\end{abstract}

\maketitle

\section{Introduction}
Let $\Omega \subset \R^{2}$ be a bounded domain whose boundary $\partial\Omega$ is of class $C^\infty$. We consider the two-dimensional, homogeneous, incompressible Navier--Stokes system with Navier boundary conditions (also called Navier friction conditions)
\begin{equation}\label{navier-stokes}
   \left\{
	\begin{aligned}
		&\partial_{t}u^{\nu} + (u^{\nu}\cdot\nabla)u^{\nu} + \nabla p^{\nu} = \nu\Delta u^{\nu}\text{ in }(0,T)\times\Omega, \\
		&\diverg u^{\nu} = 0\text{ in }(0,T)\times\Omega,\\
        &u^{\nu}\cdot n = 0\text{ on }(0,T)\times\partial\Omega,\\
        &2(Du^{\nu})_{S}n\cdot\tau + \alpha u^{\nu}\cdot \tau = 0 \text{ on }(0,T)\times\partial\Omega,
	\end{aligned}\right.
\end{equation}
where $u^{\nu}$ is the fluid velocity, $p^{\nu}$ is the pressure, $\alpha\in C^2(\partial\Omega)$ is the friction coefficient, $n$ and $\tau$ are the unit normal and tangent vector fields, $\nu>0$ is the viscosity, and $(Dv)_{S}$ denotes the rate-of-strain tensor,
\begin{equation*}
    (Dv)_{S} = \frac{1}{2}\left(\nabla v+\nabla v^{T}\right).
\end{equation*}
We will assume, as in \cite{clopeau, nussenzveig}, that the friction coefficient $\alpha$ is independent of viscosity $\nu$. It is well known that under these circumstances there exists a unique weak Leray--Hopf solution $u^{\nu}$, provided $u^{\nu}(0,\cdot)\in L^2(\Omega)$ is divergence-free and tangent to the boundary in an appropriate weak sense (more details in Section 3.1).

In contrast to the usual Dirichlet boundary condition, the Navier boundary condition admits a formulation in terms of vorticity by the differential equality
\begin{equation}\label{navier-vorticity}
    (Du)_Sn\cdot\tau+\kappa(u\cdot\tau)=\frac{1}{2}\curl u \text{ on }\partial\Omega,
\end{equation}
where $u$ is a vector field with zero normal component and $\kappa$ is the curvature of $\partial\Omega$. (For more details, we refer to \cite[Lemma 2.1]{clopeau}.) Combined with taking the curl of the momentum equation in \eqref{navier-stokes}, we obtain the vorticity formulation of the Navier--Stokes system with Navier boundary condition
\begin{equation}\label{navier-stokes-vorticity}
    \left\{
	\begin{aligned}
		&\partial_{t}\omega^{\nu} + u^{\nu}\cdot\nabla \omega^{\nu} = \nu\Delta \omega^{\nu}\text{ in }(0,T)\times\Omega, \\
		&\omega^{\nu}=\curl u^{\nu}\text{ in }(0,T)\times\Omega,\\
        &u^{\nu}\cdot n = 0\text{ on }(0,T)\times\partial\Omega,\\
        &\omega^{\nu}=(2\kappa-\alpha)u^{\nu}\cdot\tau\text{ on }(0,T)\times\partial\Omega.
	\end{aligned}\right.
\end{equation}
For a fixed time $t$, the velocity $u^{\nu}$ can be recovered from the vorticity $\omega^{\nu}$ using the Biot--Savart law. Explicitly, we write
\begin{equation*}
    u^{\nu}=K_{\Omega}(\omega^{\nu}),
\end{equation*}
where $K_{\Omega}$ is an integral operator with kernel $\nabla^{\perp}G_{\Omega}$ (the rotated gradient of the Dirichlet Green's function), hence $K_\Omega$ has order $-1$. We will frequently use the Biot--Savart law to get an initial velocity for a given initial vorticity.\\

One of the goals of this paper is to study the vanishing viscosity limit, i.e., $\nu\to0$, for the vorticity $\omega^\nu$. For $\nu=0$, the Navier--Stokes system \eqref{navier-stokes-vorticity} formally reduces to the Euler system in vorticity formulation 
\begin{equation}\label{euler-vorticity}
    \left\{
	\begin{aligned}
		&\partial_{t}\omega + u\cdot\nabla \omega = 0 \text{ in }(0,T)\times\Omega,\\
		&\omega=\curl u \text{ in }(0,T)\times\Omega,\\
        &u\cdot n=0 \text{ on }(0,T)\times\partial\Omega.
	\end{aligned}\right.
\end{equation}
Consequently, it is natural to inquire whether the vanishing viscosity limit induces the convergence of the corresponding solutions (in a suitable space).\\

Our first result answers this in the affirmative:
\begin{thm}\label{strong-convergence-vorticity}
           Let $\Omega\subset\R^2$ be a bounded domain with $C^\infty$ boundary, $\alpha\in C^{2}(\partial\Omega)$, $T\in(0,\infty)$ and $\{\omega_{0}^{\nu}\}_{\nu>0}\subset L^{p}(\Omega)$ for $p>2$ such that
           \begin{equation*}
               \omega_{0}^{\nu}\to \omega_{0} \text{ strongly in }L^{p}(\Omega) \text{ as }\nu\to0,
           \end{equation*}
           for some $\omega_{0}\in L^{p}(\Omega)$ and let $u^{\nu}$ be the unique solution to \eqref{navier-stokes} for $u^{\nu}(0,\cdot)=K_{\Omega}(\omega_{0}^{\nu})$. Then, for the associated vorticity $\omega^{\nu}=\curl u^{\nu}$, there exists a sequence $\nu_{k}\to 0$ such that 
           \begin{equation*}
               \omega^{\nu_{k}}\to \omega \text{ strongly in } C([0,T];L^{q}(\Omega))\text{ as }k\to\infty \text{ for any }q\in[1,p),\text{ }
           \end{equation*}
           and $\omega$ is a weak solution of \eqref{euler-vorticity} for $\omega(0,\cdot)=\omega_{0}$.
\end{thm}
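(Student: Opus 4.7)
The plan is to proceed in four steps: (i) derive uniform $L^p$ estimates on $\omega^\nu$, (ii) extract a weak-$*$ limit and identify it as a weak Euler solution via strong velocity compactness, (iii) apply the purely interior strong convergence machinery of Seis--Wiedemann--Wo\'{z}nicki on compacta $K\subset\subset\Omega$, and (iv) upgrade to global convergence by an equi-integrability argument in a boundary strip.

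\textbf{Uniform estimates and weak limit.} First I would establish $\sup_{\nu}\|\omega^\nu\|_{L^\infty(0,T;L^p(\Omega))}\le C$ by testing the vorticity equation in \eqref{navier-stokes-vorticity} against $|\omega^\nu|^{p-2}\omega^\nu$; the boundary term produced by integration by parts in the diffusive term is controlled via the Navier boundary identity $\omega^\nu=(2\kappa-\alpha)u^\nu\cdot\tau$, in the spirit of Clopeau--Mikeli\'c--Robert. The Biot--Savart law upgrades this to $u^\nu\in L^\infty(0,T;W^{1,p}(\Omega)\cap C^{0,\gamma}(\overline{\Omega}))$ with $\gamma=1-2/p>0$. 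Banach--Alaoglu then yields a subsequence $\omega^{\nu_k}\rightharpoonup^{*}\omega$ in $L^\infty(0,T;L^p)$. Time regularity of $\omega^{\nu_k}$ read off the equation combined with Aubin--Lions gives strong convergence $u^{\nu_k}\to u=K_\Omega(\omega)$ in $C([0,T];C^{0,\gamma'}(\overline{\Omega}))$ for any $\gamma'<\gamma$. Strong velocity convergence plus weak vorticity convergence suffices to pass to the limit in the weak formulation of the transport equation, so $(u,\omega)$ is a weak solution of \eqref{euler-vorticity} with initial datum $\omega_0$.

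\textbf{Interior and global strong convergence.} Being purely interior, the Seis--Wiedemann--Wo\'{z}nicki framework applies on every compact $K\subset\subset\Omega$ provided the inputs above are in hand, and yields $\omega^{\nu_k}\to\omega$ in $C([0,T];L^q(K))$ for each $q\in[1,p)$. To globalize, I would fix $\eps>0$, choose a boundary strip $\Omega\setminus K$ with $|\Omega\setminus K|=\delta$, and estimate by H\"older's inequality
\[
\|\omega^{\nu_k}-\omega\|_{L^q(\Omega\setminus K)}\le \delta^{1/q-1/p}\bigl(\|\omega^{\nu_k}\|_{L^p(\Omega)}+\|\omega\|_{L^p(\Omega)}\bigr),
\]
uniformly in $k$ and $t\in[0,T]$. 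Choosing $\delta$ small enough makes this contribution $<\eps/2$, while the interior strong convergence on $K$ controls the remainder for $k$ large, giving the claimed global strong convergence in $C([0,T];L^q(\Omega))$.

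\textbf{Main obstacle.} The principal difficulty is verifying that the Seis--Wiedemann--Wo\'{z}nicki argument transfers from the no-slip to the Navier-slip setting on interior sets: their uniform vorticity bound, their renormalization/DiPerna--Lions commutator estimates, and their strong velocity convergence are all formulated in terms of interior behavior, and one must confirm that the Navier-slip analogues of these inputs are genuinely supplied by step (ii). Once this is in place, the local-to-global upgrade is a soft equi-integrability step; in fact the Navier boundary condition is helpful here, since it provides explicit $L^p$ boundary data for $\omega^\nu$ via the tangential velocity trace.
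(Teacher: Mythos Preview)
Your overall architecture---(i) uniform $L^p$ bounds, (ii) weak compactness and identification of an Euler limit, (iii) interior strong convergence via Seis--Wiedemann--Wo\'{z}nicki (SWW), (iv) local-to-global upgrade by H\"older in a thin boundary strip---is exactly the paper's strategy, and your step (iv) is essentially the paper's closing argument. One cosmetic difference: the SWW output is convergence in $C_{\mathrm{loc}}([0,T);L^q_{\mathrm{loc}})$, i.e.\ open in time, so the paper first applies it on a slightly larger interval $[0,\tilde T)$ with $\tilde T>T$ before restricting to $[0,T]$; you should do the same.

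The real gap is in step (iii), and you correctly flag it as the ``main obstacle,'' but you underestimate its depth. The assertion that SWW ``applies on every compact $K\subset\subset\Omega$ provided the inputs above are in hand'' is precisely what fails. The SWW mechanism rests on their Lemma~3.3, a renormalized subsolution inequality obtained by multiplying the vorticity equation by $|\omega^\nu|^{q-2}\omega^\nu\phi$ and integrating by parts. That manipulation is not justified by the regularity coming from your step (ii): it requires at least $\omega^\nu\in L^2(0,T;H^2(\Omega))$ (or an equivalent smoothing property), which in the no-slip case follows from the well-known instantaneous regularization of Leray--Hopf solutions. For Navier slip no such instant regularization was previously available, so the ``interior'' character of the SWW estimates does not save you---the issue is global regularity of the object being tested, not the support of the test function.

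The paper resolves this by an approximation argument you have not anticipated: approximate $\omega_0^\nu$ by \emph{compatible} initial data $\omega_{0,n}^\nu$ (i.e.\ data for which the associated velocity already satisfies the Navier condition), for which strong solutions with $\omega_n^\nu\in L^2(0,T;H^2(\Omega))$ exist; derive the renormalized inequality for $\omega_n^\nu$; then prove $\omega_n^\nu\to\omega^\nu$ strongly in $L^2((0,T)\times\Omega)$ and pass to the limit. The strong convergence step in turn requires an enstrophy-type balance for a shifted vorticity $\bar\omega_n=\omega_n-(2\kappa-\alpha)u_n\cdot\tau$ (to get homogeneous Dirichlet data), control of the pressure, and an $H^2$ elliptic estimate for the Laplacian under Navier boundary conditions via Agmon--Douglis--Nirenberg theory. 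None of this is supplied by your step (ii), and without it the SWW lemma cannot be invoked.
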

The transition from Navier--Stokes to Euler has been an extensively researched topic for decades.
For context, let us give a brief overview (for a summary, we refer to \cite{maekawa}) of some past results. First of all, all known results rely on postulating a sufficiently regular solution to the Euler equations. In contrast, there are also solutions to Euler with low regularity for rather singular initial data (see, for instance, \cite{delellis}), for which practically nothing is known about the vanishing viscosity limit. With a ``good'' Euler solution and no physical boundaries (i.e., $\Omega=\R^2$ or $\Omega=\T^2$), the problem is well-understood. For velocity $u^\nu$, strong convergence has been known for some time and can be derived relatively straightforwardly via a relative energy argument (see, for example, \cite[Section 4.4]{lions}). For vorticity $\omega^\nu$, strong convergence was established recently in \cite{constantin2, ciampa, nussenzveig-wiedemann}. A key ingredient in most of these arguments is the framework of renormalized solutions introduced by DiPerna and Lions \cite{diperna-lions}.

In the presence of physical boundaries, matters become substantially more complex. The reason for this is that the decrease in the order of the system as it approaches the limit leads to a discrepancy in the boundary conditions. Historically, the preferred boundary condition for the Navier--Stokes system has been the Dirichlet boundary condition, also known as the no-slip boundary condition,
\begin{equation*}
    u^{\nu}=0\text{ on }(0,T)\times\partial\Omega.
\end{equation*}
Unfortunately, for the Dirichlet boundary condition the mismatch leads to boundary layer effects, which are still, to this day, not adequately understood. Although some results exist regarding convergence in the vanishing viscosity limit under additional conditions (see, for instance, \cite{bardos, constantin, kato, masmoudi, sammartino1, sammartino2, wiedemann, wang}), the issue is still largely unresolved. For vorticity, convergence in $L^p$ for $p>1$ can even be ruled out if the tangential velocity at the boundary of the limiting Euler solution is nonzero, i.e., a boundary layer forms in the inviscid limit\cite{kelliher2}.

The Navier boundary condition, originally introduced by C. Navier in 1827 \cite{navier},  allows the tangential ``slip'' velocity to be nonzero, making the boundary layer effects more manageable. It can also be seen as a relaxation of the ``no-slip'' boundary condition, since formally $\alpha=\infty$ recovers $u\cdot\tau=0$ on the boundary. The choice of appropriate boundary condition depends on the physical application at hand. In geophysical fluid dynamics, for example, Navier boundary conditions have been physically justified (see \cite{pedlosky}). For an overview of experimental results about the occurrence of slip for various models, we refer the reader to \cite{lauga}. Regarding the vanishing viscosity limit, the strong convergence of velocity was first established in \cite{clopeau} for bounded initial vorticity and later generalized in \cite{nussenzveig} for initial vorticity in $L^p$ with $p>2$. Under the same hypotheses, our result additionally guarantees strong convergence of the vorticity, which was previously not known to our knowledge.

In terms of regularity, the Navier boundary condition has been explored to a lesser degree. The existence and uniqueness of weak (Leray--Hopf) solutions are known in the same generality (see \cite[Theorem 6.1]{kelliher}), but, unlike for the Dirichlet boundary condition (or the case without physical boundaries), it is open whether, among other things, instant regularization (see, for example, \cite[Section 5]{galdi}) occurs. Even the existence of strong solutions has only been guaranteed if the initial velocity is in $H^2(\Omega)$, satisfies the Navier boundary condition, and the corresponding vorticity is bounded (see \cite[Theorem 2.3]{clopeau}). We are able to substantially extend the existence of strong solutions, assuming only that the initial vorticity is in $L^p(\Omega)$ for some $p>2$:
\begin{thm}\label{theorem-strong-solutions}
    Let $\Omega\subset\R^2$ be a bounded domain with $C^\infty$ boundary, $\alpha\in C^{2}(\partial\Omega)$, $T\in(0,\infty)$ and $\omega_{0}^{\nu}\in L^{p}(\Omega)$ for $p>2$. Then the unique solution $u^{\nu}$ of \eqref{navier-stokes} for $u^{\nu}(0,\cdot)=K_{\Omega}(\omega_{0}^{\nu})$ is a strong solution in the sense that:
    \begin{enumerate}
        \item[(i)] $u^\nu\in L^2(0,T;H^1(\Omega))$, divergence-free and tangent to the boundary, is a weak solution to \eqref{navier-stokes} for $u^{\nu}(0,\cdot)=K_{\Omega}(\omega_{0}^{\nu})$, i.e.,
        \begin{equation*}
        \frac{d}{dt}\int_{\Omega}u^\nu\cdot v dx+\int_{\Omega}(u^\nu\cdot\nabla u^\nu)\cdot vdx+\nu\int_{\Omega}\nabla u^\nu\cdot\nabla vdx=\nu\int_{\partial\Omega}(\kappa-\alpha)u^\nu\cdot v dS,
    \end{equation*}
    for all $v\in H^1(\Omega)$, divergence-free and tangent to the boundary, and $u^\nu(0,\cdot)=K_{\Omega}(\omega_{0}^{\nu})$.
    \item[(ii)] $u^{\nu}\in C([0,T];H^{1}(\Omega))\cap L^{2}(0,T;H^{2}(\Omega))$ and $\partial_tu^{\nu}\in L^{2}(0,T;L^{2}(\Omega))$.
    \end{enumerate}
    Moreover, the strong solution satisfies the Navier boundary condition almost everywhere, i.e., 
    \begin{equation*}
        2(Du^{\nu}(t,x))_{S}n(x)\cdot\tau(x) + \alpha(x) u^{\nu}(t,x)\cdot \tau(x) = 0 \text{ for a.e. }(t,x)\in (0,T)\times\partial\Omega.
    \end{equation*}
\end{thm}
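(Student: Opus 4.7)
The weak Leray--Hopf solution $u^\nu$ exists and is unique by \cite[Theorem 6.1]{kelliher}; the task is to upgrade its regularity. My strategy is to establish a uniform $L^\infty_tL^p_x$ bound on the vorticity, convert it into an $L^\infty$ bound on $u^\nu$ via Biot--Savart, and then apply maximal regularity for the Stokes-type operator with Navier boundary conditions. The maximal regularity rests on showing that this stationary boundary-value problem is ADN-elliptic, which is the main new ingredient hinted at in the abstract.

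For the vorticity bound I would approximate $\omega_0^\nu$ by smooth $\omega_0^{\nu,k}$ for which $K_\Omega(\omega_0^{\nu,k})\in H^2$ satisfies the Navier compatibility condition, so that \cite[Theorem 2.3]{clopeau} provides smooth strong solutions $u^{\nu,k}$ to work with. Testing their vorticity equation against $|\omega^{\nu,k}|^{p-2}\omega^{\nu,k}$, the transport term vanishes because $\diverg u^{\nu,k}=0$ and $u^{\nu,k}\cdot n=0$, while the diffusive boundary flux can be handled using the identity $\omega^{\nu,k} = (2\kappa-\alpha)u^{\nu,k}\cdot\tau$ together with the trace bound $\|u^{\nu,k}\cdot\tau\|_{L^p(\partial\Omega)}\lesssim\|\omega^{\nu,k}\|_{L^p(\Omega)}$ coming from Biot--Savart, essentially as in \cite{nussenzveig}. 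A Gr\"onwall argument then gives a uniform $L^\infty(0,T;L^p)$ bound on $\omega^{\nu,k}$. Via Biot--Savart and the two-dimensional embedding $W^{1,p}(\Omega)\hookrightarrow L^\infty(\Omega)$ (using $p>2$) this promotes to $u^{\nu,k}\in L^\infty(0,T;L^\infty(\Omega))$, and in particular $u^{\nu,k}\cdot\nabla u^{\nu,k}\in L^\infty(0,T;L^p(\Omega)) \hookrightarrow L^2(0,T;L^2(\Omega))$.

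The second step is to verify ADN-ellipticity of the stationary problem
\begin{equation*}
-\Delta u + \nabla p = f,\ \diverg u = 0 \text{ in }\Omega,\quad u\cdot n = 0,\ 2(Du)_S n\cdot\tau + \alpha u\cdot\tau = 0 \text{ on }\partial\Omega,
\end{equation*}
by freezing coefficients and checking the Lopatinskii/complementing condition on the half-space, namely that the only bounded solution of the homogeneous frozen problem with zero data is the zero solution. The associated evolution problem then enjoys maximal $L^2$-regularity, so inserting $-u^{\nu,k}\cdot\nabla u^{\nu,k}$ as forcing and noting that $u^{\nu,k}(0,\cdot)\in H^1(\Omega)$ (since $K_\Omega(\omega_0^{\nu,k})\in W^{1,p}\subset H^1$) yields uniform bounds on $u^{\nu,k}$ in $L^2(0,T;H^2(\Omega))\cap C([0,T];H^1(\Omega))$ and on $\partial_tu^{\nu,k}$ in $L^2(0,T;L^2(\Omega))$. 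Passing to the limit $k\to\infty$ by weak/weak-$*$ compactness and identifying the limit with $u^\nu$ via uniqueness completes parts (i) and (ii). Since $u^\nu\in L^2(0,T;H^2(\Omega))$, the trace $(Du^\nu)_Sn\cdot\tau$ lies in $L^2(0,T;L^2(\partial\Omega))$, and the weak formulation in (i) then forces the Navier condition to hold a.e.

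The main obstacle will be the Lopatinskii/complementing condition, since the boundary operators couple the zeroth-order normal constraint $u\cdot n=0$ with a first-order Robin-type condition involving both $\partial_n(u\cdot\tau)$ and $\partial_\tau(u\cdot n)$. The explicit calculation on the half-space with frozen coefficients has to be carried out for the full Stokes-type symbol including the pressure gradient, and the curvature-dependent lower-order terms encoded in the equivalent relation \eqref{navier-vorticity} (which couples $\alpha$ to $\kappa$) must be tracked carefully when pulling back from the half-space to $\Omega$ via boundary charts.
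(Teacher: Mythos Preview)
Your strategy is viable but takes a genuinely different route from the paper. You propose to verify ADN-ellipticity for the full \emph{Stokes} system (velocity plus pressure) with Navier conditions and then invoke maximal $L^2$-parabolic regularity, treating the convective term as forcing. The paper instead checks the complementing condition for the plain \emph{Laplacian} on divergence-free vector fields subject to $u\cdot n=0$ and $\partial_n u\cdot\tau+(\alpha-\kappa)u\cdot\tau=0$ (Lemma~\ref{riesz-estimate}), with no pressure in the symbol; it then exploits the two-dimensional identity $\Delta u=\nabla^\perp\omega$ to convert a direct enstrophy estimate on the shifted vorticity $\overline{\omega_n}=\omega_n-(2\kappa-\alpha)u_n\cdot\tau\in H^1_0$ into the $L^2(0,T;H^2)$ bound on $u_n$. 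No maximal regularity machinery is used: once $\|\nabla\omega_n\|_{L^2_tL^2_x}$ is bounded, the $H^2$ bound follows from the stationary ADN estimate, and the $\partial_t u$ bound follows algebraically from the momentum equation plus a pressure estimate (Lemma~\ref{pressure-lemma}).

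The paper's route is more elementary and self-contained---the complementing check is simpler without the pressure row, and the passage from stationary ADN estimates to maximal parabolic regularity, which you treat as immediate, actually requires an additional ingredient (sectorial resolvent bounds \`a la Agmon, or a parabolic Lopatinskii--Shapiro check, or a citation to existing Stokes--Navier maximal regularity results). Your approach, on the other hand, is more conceptual and does not rely on the 2D-specific identity $\Delta u=\nabla^\perp\omega$, so it would transfer more readily to three dimensions. One minor point: your concern about tracking the curvature-dependent terms through boundary charts is overstated---$(\alpha-\kappa)u\cdot\tau$ is zeroth order and drops out of the principal boundary symbol, so it plays no role in the complementing condition itself.
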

As a remark, we want to point out that the strong solution in \cite{clopeau} enjoys some additional properties. The velocity $u^\nu$ is in $C([0,T];H^{2}(\Omega))$ (also $\partial_t u^\nu\in L^2(0,T;H^1(\Omega))$) and the vorticity $\omega^\nu$ is bounded in space-time. Also, our assumption $\partial\Omega\in C^\infty$ can easily be weakened at least to $C^4$, but we chose to work with a smooth boundary for simplicity. 
\subsection{Strategy}
Our Theorem \ref{strong-convergence-vorticity} essentially hinges on two past results. Seis, Wiedemann and Wo\'{z}nicki showed in \cite[Theorem 1.3]{wiedemann} a local version of Theorem \ref{strong-convergence-vorticity}, i.e., convergence in \\$C_{loc}([0,T);L^{q}_{loc}(\Omega))$ for $q\in[1,p)$, for the no-slip boundary condition, under an additional hypothesis.  They assume that the $p$-enstrophies are locally bounded uniformly in viscosity, that is, for every compact set $K\subset\Omega$, there exists a constant $C_K$ such that
\begin{equation}\label{local-p-enstrophies}
    \sup_{\nu\in(0,1)}\sup_{t\in(0,T)}\int_K\abs{\omega^\nu(t,x)}^pdx\leq C_K.
\end{equation}
With the Navier boundary conditions, there is no need for the extra hypothesis. In \cite{nussenzveig}, it was shown that for every $p>2$, the $p$-enstrophies are even globally bounded uniformly in time and viscosity
\begin{equation}\label{global-p-enstrophies}
    \sup_{\nu\in(0,1)}\sup_{t\in(0,T)}\int_\Omega\abs{\omega^\nu(t,x)}^pdx\leq C\left(\norm{\omega_0^\nu}_{L^p(\Omega)}\right).
\end{equation}
Provided that the statement in \cite{wiedemann} also holds for the Navier boundary condition, we could upgrade to convergence in $C([0,T];L^{q}(\Omega))$, which is precisely Theorem \ref{strong-convergence-vorticity}, due to the global bound \eqref{global-p-enstrophies}. At first glance, it should be no problem to adapt the proof in \cite{wiedemann} to the Navier boundary condition, as the argument is purely local. But the lack of known instant regularization poses a problem for \cite[Lemma 3.3]{wiedemann}, as the previously known regularity is insufficient to adapt the proof. The lemma states that up to an additive constant, which vanishes as $\nu\to0$, the solution $\omega^\nu$ is a renormalized subsolution of the transport equation:
\begin{equation}\label{lemma 3.3}
    \begin{aligned}
        &\forall q\in[1,p], \phi\in C^\infty_c([0,T)\times\Omega) \text{ with }\phi\geq0: \\
        & 0\leq \int_{0}^{T}\int_{\Omega}\abs{\omega^{\nu}}^q(\partial_{t}\phi+u^{\nu}\cdot\nabla\phi)dxdt+\int_{\Omega}\abs{\omega_{0}^{\nu}}^q\phi(0,\cdot)dx+\nu C\left(\norm{\omega_{0}^{\nu}}_{L^{p}(\Omega)}\right).
    \end{aligned}
\end{equation}
To overcome this obstacle, we employ a technique from \cite{clopeau, nussenzveig}. We approximate our initial data by a sequence of ``compatible'' initial values \(\omega^\nu_{0,n}\) (see Section 3.1 for details) and then study the corresponding solutions \(\omega^\nu_n\). Next, we use an elliptic result for the Laplacian with Dirichlet boundary condition to attain the regularity required by \eqref{lemma 3.3}. Moreover, we show
\(\omega_n^\nu\to \omega^\nu\) in \(L^2(0,T;L^2(\Omega))\), guaranteeing that \eqref{lemma 3.3} also holds in the limit \(\omega^\nu\). As a by-product, we also get the additional regularity $\omega^\nu\in L^2(0,T;H^1(\Omega))$.

This motivated us to see if it is possible to get even $u^\nu\in L^2(0,T;H^2(\Omega))$, which would imply the existence of strong solutions. In two dimensions, we have the identity $\Delta v = \nabla^\perp\curl v$ if $v$ is divergence-free. Therefore, the problem reduces to whether $\norm{u^\nu}_{H^2(\Omega)}$ can be bounded by $\norm{\Delta u^\nu}_{L^2(\Omega)}$, which is not trivial for the Navier boundary conditions. The Agmon--Douglis--Nirenberg theory \cite{agmon} gives us the desired estimate, provided that the Laplacian with the Navier boundary condition is a \textit{well-behaved} elliptic operator in the sense of Definition \ref{elliptic-problem}, which we will verify. For the convenience of the reader, we restate the relevant Agmon--Douglis--Nirenberg theory in Appendix B.

\section{Notation}
In the subsequent discussion, $x,y\in\overline{\Omega}$ will always be spatial variables, while $t,s\in[0,T]$ are reserved for time variables. Between vectors $v,w\in\R^d$, 
we will use $v\cdot w$ for the standard scalar product and $A:B$ for the scalar product between matrices $A,B\in \R^{d\times d}$. The identity matrix is denoted by $I_d\in\R^{d\times d}$. For $h,k\in\R^d\setminus\{0\}$, the directional derivative along $h$ is defined by
 \begin{equation*}
     \partial_h v:= \frac{1}{\abs{h}}(h\cdot\nabla)v,
 \end{equation*}
 and the outer product $\otimes$ is given by
 \begin{equation*}
     h\otimes k := \left(\begin{array}{cc}
         h_1 k_1  & h_1 k_2\\
         h_2 k_1  & h_2 k_2
    \end{array}\right).
 \end{equation*}
Note that we have, for vector valued $v$, the identities $k\otimes h : \nabla v = \partial_h v\cdot k=k\cdot(\nabla vh).$
In two dimensions, the curl operator is defined as
\begin{equation*}
    \curl v:=\partial_{x_1}v_2-\partial_{x_2}v_1.
\end{equation*}
For a scalar $\psi$, we introduce the orthogonal gradient as 
\begin{equation*}
    \nabla^{\perp}\psi:=\left(\begin{array}{c}
         -\partial_{x_{2}}  \\ \partial_{x_{1}}
    \end{array}\right)\psi.
\end{equation*}
Following \cite{clopeau}, we introduce the following Hilbert spaces
\begin{equation*}
    \begin{aligned}
        &L^2_0(\Omega)=\set{v\in L^2(\Omega):\int_\Omega v\;dx=0},\\
        &H=\set{v\in L^{2}(\Omega): \diverg v=0 \text{ in } \Omega \text{ and }v\cdot n = 0 \text{ on }\partial\Omega},\\
        &V=\set{v\in H^{1}(\Omega): \diverg v=0 \text{ in } \Omega \text{ and }v\cdot n = 0 \text{ on }\partial\Omega},\\
        &W=\set{v\in H^{2}(\Omega)\cap V: 2(Dv)_{s}n\cdot\tau + \alpha v\cdot\tau=0 \text{ on }\partial\Omega}.\\
    \end{aligned}
\end{equation*}
Note that the boundary conditions in these definitions are intended in an appropriate trace sense. In line with \cite{wiedemann}, convergence of $f_n$ to $f$ in $C_{loc}([0,T);L^{q}_{loc}(\Omega))$ means that for any $\chi\in C^\infty_c(\Omega)$ and $T'\in(0,T)$ it holds that $f_n\chi\to f\chi$ in $C([0,T'];L^{q}(\Omega))$.
In terms of notation, we do not distinguish between scalar and vector valued function spaces.
Lastly, we follow the convention that the value of constants in an inequality can vary from line to line and can depend on the domain and functions of the domain. If we want  to express explicitly that the constant depends on some parameters $a_1,..,a_n$, we write $C(a_1,..,a_n)$.
\section{Viscous solutions}
In this part, we want to fix $\nu>0$ and consider solutions to \eqref{navier-stokes} so we omit the superscript $\nu$. Before recalling some facts from \cite{clopeau, kelliher, nussenzveig}, we rewrite the Navier boundary condition, which will later be useful for Lemma \ref{riesz-estimate}.
\begin{lem}\label{boundar-identity}
    Let $v\in W$. Then we have
    \begin{equation}\label{elliptic-boundary}
        ((n\cdot\nabla)v)\cdot \tau + (\alpha-\kappa)(v\cdot\tau)=0 \text{ on }\partial\Omega.
    \end{equation}
\end{lem}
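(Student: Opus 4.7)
The plan is to re-express the strain-tensor part of the Navier condition defining $W$ in terms of the normal directional derivative $(n\cdot\nabla)v$. Two elementary identities do all the work. First, starting from $2(Dv)_S = \nabla v + (\nabla v)^T$ and the identity $(\nabla v)h = (h\cdot\nabla)v$, a direct computation yields
\[
2(Dv)_S n\cdot\tau = \tau\cdot(\nabla v)n + n\cdot(\nabla v)\tau = ((n\cdot\nabla)v)\cdot\tau + ((\tau\cdot\nabla)v)\cdot n
\]
on $\partial\Omega$ (a trace identity, legitimate since $v\in H^2(\Omega)$). Second, writing $\partial_{x_1}v_2 - \partial_{x_2}v_1$ in the orthonormal frame $(n,\tau)$ (with $\tau = (-n_2,n_1)$) gives the companion identity
\[
\curl v = ((n\cdot\nabla)v)\cdot\tau - ((\tau\cdot\nabla)v)\cdot n,
\]
a short coordinate check.

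Adding the two yields $((n\cdot\nabla)v)\cdot\tau = (Dv)_S n\cdot\tau + \tfrac{1}{2}\curl v$. Now I would plug in the already-known vorticity form of the Navier slip identity \eqref{navier-vorticity}, namely $(Dv)_S n\cdot\tau = \tfrac{1}{2}\curl v - \kappa(v\cdot\tau)$ (valid for any field tangent to $\partial\Omega$, as proved in \cite{clopeau}), together with the strain-tensor boundary condition built into $W$, $2(Dv)_S n\cdot\tau = -\alpha(v\cdot\tau)$. Combining these two gives $\curl v = (2\kappa - \alpha)(v\cdot\tau)$ on $\partial\Omega$, and substituting back into $((n\cdot\nabla)v)\cdot\tau = (Dv)_S n\cdot\tau + \tfrac{1}{2}\curl v$ produces $((n\cdot\nabla)v)\cdot\tau = (\kappa - \alpha)(v\cdot\tau)$, which is precisely \eqref{elliptic-boundary}.

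There is no serious obstacle; the lemma is an algebraic reformulation of the Navier condition. The only care needed is to keep the sign of $\kappa$ consistent with the convention used in \eqref{navier-vorticity}, which is why routing the computation through \eqref{navier-vorticity} is cleaner than deriving a Frenet-type identity from scratch. If one preferred to avoid citing \eqref{navier-vorticity}, an alternative route is to differentiate the tangency constraint $v\cdot n = 0$ along $\partial\Omega$: from $0 = (\tau\cdot\nabla)(v\cdot n) = ((\tau\cdot\nabla)v)\cdot n + v\cdot(dn/ds)$ and the Frenet relation $dn/ds = \kappa\tau$, one obtains $((\tau\cdot\nabla)v)\cdot n = -\kappa(v\cdot\tau)$, which combined with the first displayed identity and the Navier boundary condition gives \eqref{elliptic-boundary} directly.
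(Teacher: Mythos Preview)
Your proposal is correct. The alternative route you sketch at the end---differentiating the tangency constraint $v\cdot n=0$ along $\tau$ to obtain $((\tau\cdot\nabla)v)\cdot n=-\kappa(v\cdot\tau)$, and combining this with the expansion $2(Dv)_S n\cdot\tau=\partial_n v\cdot\tau+\partial_\tau v\cdot n$ and the Navier condition---is precisely the paper's proof. Your primary route through the curl identity and \eqref{navier-vorticity} is also valid but is a detour: the identity \eqref{navier-vorticity} from \cite{clopeau} is itself established by the same tangential differentiation of $v\cdot n=0$, so citing it here just repackages the direct two-line computation. Either path is fine for such an algebraic lemma; the direct one is simply shorter and self-contained.
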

\begin{proof}
    Due to $v\cdot n=0$ being constant on $\partial\Omega$, differentiating along $\tau$ gives
    \begin{equation*}
        0 = \partial_\tau(v\cdot n)= \partial_\tau v\cdot n + v\cdot\partial_\tau n = \partial_\tau v\cdot n + (v\cdot\tau)\tau\cdot\partial_\tau n = \partial_\tau v\cdot n + \kappa (v\cdot\tau).
    \end{equation*}
    The Navier boundary condition, on the other hand, gives
		\begin{equation*}
		-\alpha v\cdot \tau=2(Dv)_{s}n\cdot\tau=(\nabla v n)\cdot\tau+((\nabla v)^T n)\cdot \tau=\partial_nv\cdot\tau+\partial_\tau v\cdot n.
		\end{equation*}
		Combining both and rearranging, we arrive at~\eqref{elliptic-boundary}.
\end{proof}
\subsection{Weak solutions}
In the formal derivation of the distributional formulation of the Navier--Stokes equations with Navier boundary condition, one employs~\eqref{elliptic-boundary} to obtain
\begin{defn}\label{weak-solution-velocity}
    For $\nu>0$ and $u_{0}\in H$, $u\in C([0,T];H)\cap L^2(0,T;V)$ is called a weak solution to \eqref{navier-stokes} if $u(0)=u_{0}$ and 
\begin{equation}\label{weak-formulation}
    \frac{d}{dt}\int_{\Omega}u\cdot v dx+\int_{\Omega}(u\cdot\nabla u)\cdot vdx+\nu\int_{\Omega}\nabla u:\nabla vdx=\nu\int_{\partial\Omega}(\kappa-\alpha)u\cdot v dS,
\end{equation}
for all $v\in V$, in the sense of distributions.
\end{defn}
Like in the Dirichlet case, there exists a unique weak (Leray--Hopf) solution for $u_{0}\in H$.
\begin{thm}[Theorem 6.1, \cite{kelliher}]\label{energybound}
    For $\nu>0$ and $u_{0}\in H$, there exists a unique weak solution $u$ to \eqref{navier-stokes} with $u(0,\cdot)=u_0$. Moreover, $\partial_t u\in L^2(0,T;V')$ and we have the energy inequality
    \begin{equation*}
        \norm{u(t)}_{L^2(\Omega)}\leq e^{C(\alpha)\nu t}\norm{u_0}_{L^2(\Omega)},
    \end{equation*}
    with $C(\alpha)=0$ if $\alpha\geq0$ on $\partial\Omega$.
\end{thm}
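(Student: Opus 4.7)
The plan is a classical Faedo--Galerkin scheme, tailored to the Navier boundary condition. The key observation is that, using Lemma~\ref{boundar-identity}, the weak formulation \eqref{weak-formulation} is equivalent to the symmetric strain-tensor formulation
\begin{equation*}
\frac{d}{dt}\int_{\Omega} u\cdot v\, dx+\int_{\Omega}(u\cdot\nabla u)\cdot v\, dx + 2\nu\int_{\Omega}(Du)_S\!:\!(Dv)_S\, dx+\nu\int_{\partial\Omega}\alpha\, u\cdot v\, dS=0,
\end{equation*}
which is much cleaner for energy estimates and exposes directly how the sign of $\alpha$ enters. Set $a(u,v):=2\int_\Omega (Du)_S\!:\!(Dv)_S\, dx+\int_{\partial\Omega}\alpha\, u\cdot v\, dS$. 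By Korn's second inequality on the bounded $C^\infty$ domain $\Omega$, together with the continuous trace $H^1(\Omega)\hookrightarrow L^2(\partial\Omega)$, the form $a+\lambda\langle\cdot,\cdot\rangle_{L^2}$ is continuous and coercive on $V$ for some $\lambda=\lambda(\alpha)\geq 0$, and one may take $\lambda=0$ when $\alpha\geq 0$.

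\textbf{Galerkin approximation and a priori estimates.} I would use the eigenfunctions $\{e_k\}$ of the positive self-adjoint operator associated with $a+\lambda\langle\cdot,\cdot\rangle_{L^2}$ on $H$; these form a Hilbert basis of $H$ contained in $V$. On $V_n:=\mathrm{span}\{e_1,\dots,e_n\}$ I would solve the finite-dimensional projected ODE with initial datum $P_n u_0$, obtaining a local-in-time smooth solution $u_n$. Testing against $u_n$ itself and using that $(u_n\!\cdot\!\nabla u_n,u_n)=0$ on $V_n\subset V$ yields the energy identity
\begin{equation*}
\tfrac{1}{2}\tfrac{d}{dt}\|u_n\|_{L^2}^2+2\nu\|(Du_n)_S\|_{L^2}^2+\nu\!\int_{\partial\Omega}\alpha|u_n|^2\, dS=0.
\end{equation*}
If $\alpha\geq 0$, all contributions besides the time derivative are non-negative, giving $\|u_n(t)\|_{L^2}\leq\|u_0\|_{L^2}$ directly, i.e.\ $C(\alpha)=0$. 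For general $\alpha$, the boundary term is controlled by $\|\alpha\|_\infty\|u_n\|_{L^2(\partial\Omega)}^2\le \varepsilon\|\nabla u_n\|_{L^2}^2+C_\varepsilon\|u_n\|_{L^2}^2$, absorbed into the strain dissipation via Korn, and Gronwall yields the claimed exponential bound.

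\textbf{Compactness and passage to the limit.} The estimate above gives uniform bounds on $u_n$ in $L^\infty(0,T;H)\cap L^2(0,T;V)$. Writing the Galerkin equation as $\partial_t u_n = P_n F(u_n)$ and estimating the convective term in 2D by $\|u\!\cdot\!\nabla u\|_{V'}\lesssim\|u\|_{L^4}^2\lesssim\|u\|_{L^2}\|u\|_{H^1}$ provides a uniform bound on $\partial_t u_n$ in $L^2(0,T;V')$. Aubin--Lions then furnishes a subsequence converging strongly in $L^2(0,T;H)$ and weakly in $L^2(0,T;V)$. The quadratic term passes via strong $L^2$ convergence; the linear boundary term passes via continuity of the trace on $V$. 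The limit thus satisfies the weak formulation, and the initial datum is attained because $u_n\in C([0,T];V_n)$ with $u_n(0)=P_nu_0\to u_0$ in $H$ together with $u\in C([0,T];H)$ from the energy identity.

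\textbf{Uniqueness and energy inequality.} Uniqueness is the genuinely two-dimensional step. For two solutions $u,u'$ the difference $w=u-u'$ satisfies $\tfrac{1}{2}\tfrac{d}{dt}\|w\|_{L^2}^2+\nu a(w,w)=-(w\!\cdot\!\nabla u,w)$. Ladyzhenskaya's inequality bounds the right-hand side by $\tfrac{\nu}{2}\|\nabla w\|_{L^2}^2+C\nu^{-1}\|u\|_{H^1}^2\|w\|_{L^2}^2$; absorbing the gradient term into $\nu a(w,w)$ and invoking Gronwall against $\|u\|_{H^1}\in L^2(0,T)$ gives $w\equiv 0$. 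The energy inequality for $u$ is inherited from the Galerkin identity by weak lower semicontinuity of the strain norm. I expect the main obstacle to be the passage to the limit and stability under weak limits of the boundary integral: this is where the symmetric strain-tensor form together with the Korn/trace estimates is essential, since without it the sign of $\kappa-\alpha$ would obscure both coercivity and the refinement $C(\alpha)=0$ for $\alpha\ge 0$.
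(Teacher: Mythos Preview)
The paper does not supply its own proof of this theorem; it is simply quoted from Kelliher~\cite{kelliher}, so there is nothing in the present paper to compare against. Your outline is essentially the standard Galerkin argument that Kelliher carries out, and it is sound: the reformulation via $2(Du)_S\!:\!(Dv)_S$ and the boundary term $\nu\int_{\partial\Omega}\alpha\,u\cdot v\,dS$ is exactly the identity Kelliher uses (it follows from the relation $\int_\Omega\nabla u:\nabla v\,dx=2\int_\Omega(Du)_S:(Dv)_S\,dx-\int_{\partial\Omega}\kappa\,u\cdot v\,dS$ for $u,v\in V$), and it makes transparent why $C(\alpha)=0$ when $\alpha\geq 0$.

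One small imprecision: the claim that one may take $\lambda=0$ in the coercivity of $a+\lambda\langle\cdot,\cdot\rangle_{L^2}$ whenever $\alpha\geq 0$ is not true in general---on a disk with $\alpha\equiv 0$ the rigid rotation $x^\perp$ lies in $V$ and annihilates $a$. This does not affect your proof, since the spectral basis can always be built with some $\lambda>0$, and the refinement $C(\alpha)=0$ comes directly from the sign of the terms in the energy identity, not from coercivity. Otherwise the compactness, limit passage, and 2D uniqueness via Ladyzhenskaya are all standard and correctly sketched.
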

In contrast to the Dirichlet boundary condition, Lopes Filho et al.\ showed in \cite{nussenzveig} that given any initial vorticity $\omega_0\in L^p(\Omega)$ for $p>2$, there exists a uniform $L^p$-bound for the vorticity.
\begin{prop}[Proposition 1, \cite{nussenzveig}]
    Fix $0<\nu<1$. Let $\omega_0\in L^p(\Omega)$ for some $p>2$ and $u$ be the unique solution of \eqref{navier-stokes} with $u(0,\cdot)=K_{\Omega}(\omega_{0})$. Then we have for the associated vorticity $\omega=\curl u$ the uniform estimate
    \begin{equation}\label{uniform-vorticity-bound}
    \norm{\omega}_{L^{\infty}(0,T;L^{p}(\Omega))}\leq C\left(\norm{\omega_{0}}_{L^{p}(\Omega)}\right),
\end{equation}
with constant $C>0$ independent of viscosity $\nu$.
\end{prop}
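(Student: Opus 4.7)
My strategy is an $L^p$-energy estimate on the vorticity formulation~\eqref{navier-stokes-vorticity}, after approximating $\omega_0$ by smooth compatible data for which~\cite{clopeau} guarantees smooth solutions with all the regularity needed to justify the subsequent manipulations. Multiplying the vorticity equation by $|\omega|^{p-2}\omega$ and integrating, the transport term drops out because $\diverg u=0$ and $u\cdot n=0$ on $\partial\Omega$, while the viscous term yields both a nonnegative interior dissipation and a boundary contribution, producing the identity
\begin{equation*}
    \frac{1}{p}\frac{d}{dt}\norm{\omega(t,\cdot)}_{L^p(\Omega)}^p+\nu(p-1)\int_\Omega|\omega|^{p-2}|\nabla\omega|^2\,dx= I,\qquad I:=\nu\int_{\partial\Omega}|\omega|^{p-2}\omega\,\partial_n\omega\,dS.
\end{equation*}
The task thus reduces to controlling $I$ in terms of $\norm{\omega}_{L^p(\Omega)}$ alone, uniformly in $\nu\in(0,1)$.

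Estimating $I$ is the main obstacle: a direct approach would require control of $\partial_n\omega|_{\partial\Omega}$, which is precisely the quantity that becomes singular in the vanishing viscosity limit and is not available a priori. My plan is to decouple the boundary data from the diffusion by writing $\omega=\omega_1+\omega_h$, where $\omega_h(t,\cdot)$ is the harmonic extension into $\Omega$ of the boundary trace $g(t,\cdot):=(2\kappa-\alpha)u(t,\cdot)\cdot\tau$ imposed by~\eqref{navier-stokes-vorticity}, while $\omega_1:=\omega-\omega_h$ vanishes on $\partial\Omega$. Since $\Delta\omega_h=0$, the function $\omega_1$ solves a parabolic equation with zero Dirichlet data and source $-\partial_t\omega_h-u\cdot\nabla\omega_h$, so that testing against $|\omega_1|^{p-2}\omega_1$ produces no boundary integral and reduces matters to bounding this source term in a dual $L^{p'}$-type norm.

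The last ingredient is the Biot--Savart law: since $p>2$, the Sobolev embedding $W^{1,p}(\Omega)\hookrightarrow L^\infty(\Omega)$ combined with $\norm{u}_{W^{1,p}(\Omega)}\leq C\norm{\omega}_{L^p(\Omega)}$ yields $\norm{u}_{L^\infty(\partial\Omega)}\leq C\norm{\omega}_{L^p(\Omega)}$, hence $\norm{g}_{L^p(\partial\Omega)}\leq C(\alpha,\kappa)\norm{\omega}_{L^p(\Omega)}$. Classical $L^p$-estimates for the Poisson integral then transfer this to a bound on $\norm{\omega_h}_{L^p(\Omega)}$, and to bounds on $\nabla\omega_h$ using the fact that $u\big|_{\partial\Omega}\in W^{1-1/p,p}(\partial\Omega)$. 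The genuinely delicate point is the time-derivative $\partial_t\omega_h$, which depends on $\partial_t u|_{\partial\Omega}$; I would handle it by duality, pairing against $|\omega_1|^{p-2}\omega_1$ and using the momentum equation in~\eqref{navier-stokes} to trade time derivatives for spatial ones on the test function, at the cost of factors involving $\nu$ and $\norm{u}_{L^\infty}$ but no uncontrolled boundary quantities. Combined with the triangle inequality $\norm{\omega}_{L^p}\leq\norm{\omega_1}_{L^p}+\norm{\omega_h}_{L^p}$, these bounds produce a differential inequality of the schematic form
\begin{equation*}
    \frac{d}{dt}\norm{\omega(t,\cdot)}_{L^p(\Omega)}^p\leq C(\alpha,\kappa)\bigl(1+\norm{\omega(t,\cdot)}_{L^p(\Omega)}^p\bigr),
\end{equation*}
to which Gronwall's lemma applies. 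A standard weak-$\ast$ compactness and lower-semicontinuity argument then passes the bound from the smooth approximation to the original $L^p$ data, completing the proof.
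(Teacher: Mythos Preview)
The paper does not supply its own proof of this proposition; it is quoted verbatim from~\cite{nussenzveig} and used as a black box. So there is nothing in the paper to compare against, but your outline has a genuine gap.

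Your harmonic-lift decomposition $\omega=\omega_1+\omega_h$ does remove the boundary term in the $L^p$-energy identity for $\omega_1$, but the resulting source $-u\cdot\nabla\omega_h$ does not lead to the \emph{linear} differential inequality you assert. By Biot--Savart and Morrey one has $\|u\|_{L^\infty}\lesssim\|\omega\|_{L^p}$, and by elliptic regularity for the Dirichlet Laplacian $\|\nabla\omega_h\|_{L^p(\Omega)}\lesssim\|g\|_{W^{1-1/p,p}(\partial\Omega)}\lesssim\|u\|_{W^{1,p}(\Omega)}\lesssim\|\omega\|_{L^p}$. Pairing $u\cdot\nabla\omega_h$ against $|\omega_1|^{p-2}\omega_1$ therefore contributes a term of order $\|\omega\|_{L^p}^{p+1}$, so the Gronwall inequality is superlinear and blows up in finite time. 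Replacing Morrey by Gagliardo--Nirenberg to exploit the uniform energy bound on $\|u\|_{L^2}$ only lowers the exponent to $p+\theta$ with $\theta\in(0,1)$, which is still fatal. Integrating by parts to shift the gradient onto $|\omega_1|^{p-2}\omega_1$ does not help either: absorption into the dissipation $\nu(p-1)\int|\omega_1|^{p-2}|\nabla\omega_1|^2$ then costs a factor $\nu^{-1}$ and destroys the uniformity in $\nu$ that is the whole point.

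The time-derivative source is not as benign as you suggest. Since $\omega_h$ is the Poisson extension of $g=(2\kappa-\alpha)u\cdot\tau$, the function $\partial_t\omega_h$ is the Poisson extension of $(2\kappa-\alpha)\partial_tu\cdot\tau$, and on $\partial\Omega$ the momentum equation gives
\[
\partial_tu\cdot\tau=-(u\cdot\nabla u)\cdot\tau-\partial_\tau p+\nu\,\Delta u\cdot\tau=-(u\cdot\nabla u)\cdot\tau-\partial_\tau p+\nu\,\partial_n\omega.
\]
Thus the very quantity $\nu\,\partial_n\omega\big|_{\partial\Omega}$ that the decomposition was designed to avoid is smuggled back in through $\partial_t\omega_h$; your ``duality'' remark does not explain how to control it without a $\nu^{-1}$ loss or an uncontrolled boundary integral. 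The argument in~\cite{nussenzveig} proceeds differently: the key point there is that the boundary data satisfy the \emph{sublinear} bound $\|g\|_{L^\infty(\partial\Omega)}\lesssim\|u\|_{L^\infty}\lesssim\|u\|_{L^2}^{1-\theta}\|\omega\|_{L^p}^{\theta}$ with $\theta<1$ (Gagliardo--Nirenberg plus the uniform energy estimate), and it is this strict sublinearity of the boundary feedback, combined with a maximum-principle-type argument for the advection--diffusion equation, that allows the loop to close uniformly in $\nu$.
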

As a straightforward consequence, this also gives rise to a uniform bound for the velocity $u$, which we state in the following corollary.
\begin{cor}\label{uniform-velocity-corollary}
    Fix $0<\nu<1$ and let $\omega_0\in L^p(\Omega)$ for some $p>2$. Then we have for the unique solution $u$ of \eqref{navier-stokes} with $u(0,\cdot)=K_{\Omega}(\omega_{0})$ the uniform estimate
    \begin{equation}\label{uniform-velocity-bound}
        \norm{u}_{L^{\infty}(0,T;W^{1,p}(\Omega))}\leq C\left(\norm{\omega_{0}}_{L^{p}(\Omega)}\right),
    \end{equation}
    with constant $C>0$ independent of viscosity $\nu$. Moreover, we also have a uniform bound for $u$ in $L^{\infty}(0,T;C(\overline{\Omega}))$.
\end{cor}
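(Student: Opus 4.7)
The plan is to combine the explicit Biot--Savart representation $u = K_\Omega(\omega)$ with classical $L^p$ elliptic theory for the Dirichlet Laplacian, and then exploit the two-dimensional Sobolev embedding. First, I would identify the action of $K_\Omega$: writing $\psi$ for the Dirichlet potential solving $\Delta\psi = \omega$ in $\Omega$ with $\psi = 0$ on $\partial\Omega$, the stated kernel $\nabla^\perp G_\Omega$ means precisely that $u = \nabla^\perp \psi$, so that differentiation of $u$ reduces to second differentiation of $\psi$.

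Second, since $\partial\Omega$ is $C^\infty$ and $1 < p < \infty$, the standard Agmon--Douglis--Nirenberg / Calder\'on--Zygmund estimates for the Dirichlet Laplacian give
\begin{equation*}
    \|\psi\|_{W^{2,p}(\Omega)} \leq C\|\omega\|_{L^p(\Omega)},
\end{equation*}
and therefore $\|u\|_{W^{1,p}(\Omega)} \leq C\|\omega\|_{L^p(\Omega)}$ pointwise in time. Combining this with the uniform-in-viscosity bound \eqref{uniform-vorticity-bound} from the previous proposition immediately yields
\begin{equation*}
    \|u\|_{L^\infty(0,T;W^{1,p}(\Omega))} \leq C\bigl(\|\omega_0\|_{L^p(\Omega)}\bigr),
\end{equation*}
which is \eqref{uniform-velocity-bound}.

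Third, for the $C(\overline\Omega)$ bound I would invoke the Sobolev embedding theorem: in two dimensions, for $p > 2$,
\begin{equation*}
    W^{1,p}(\Omega) \hookrightarrow C^{0,1-2/p}(\overline\Omega) \hookrightarrow C(\overline\Omega),
\end{equation*}
with a continuous embedding constant depending only on $\Omega$ and $p$. Applying this pointwise in time to the previous estimate gives the asserted uniform bound in $L^\infty(0,T;C(\overline\Omega))$.

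There is no substantial obstacle here; the only mild point to be careful about is that $K_\Omega$ is defined through its kernel rather than as ``the'' inverse Biot--Savart operator, so I would not need to address whether $u\cdot n = 0$ or harmonic corrections are required in the multiply connected case: the elliptic estimate for $\nabla^\perp \psi$ with $\psi$ vanishing on $\partial\Omega$ holds verbatim, and that is exactly what bounds $K_\Omega \omega$ in $W^{1,p}$.
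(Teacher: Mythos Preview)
Your proposal is correct and follows essentially the same route as the paper: both arguments pass through the stream function $\psi$ solving the Dirichlet problem $\Delta\psi=\omega$, invoke the $W^{2,p}$ Calder\'on--Zygmund estimate to control $\nabla u$ by $\omega$, and then appeal to Morrey's embedding for the $C(\overline\Omega)$ bound. The only cosmetic difference is that the paper first reduces to bounding $\|\nabla u\|_{L^p}$ via a Poincar\'e argument (constant vector fields in $V$ vanish) and then cites its Lemma~\ref{curl_estimate}, whereas you bound the full $W^{1,p}$ norm directly through $\|\psi\|_{W^{2,p}}$; the underlying estimate is identical.
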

\begin{proof}
        First, note that the only constant vector field in $V$ is 0, so the Poincar\'e inequality applies. Thus, it suffices to bound the $L^{p}$-norm of $\nabla u$. With the Calder\'on--Zygmund inequality (see Lemma \ref{curl_estimate}) and \eqref{uniform-vorticity-bound}, we deduce that
    \begin{equation*}
        \begin{aligned}
            \norm{\nabla u}_{L^{p}(\Omega)} \lesssim   \norm{\omega}_{L^{p}(\Omega)} \lesssim  \norm{\omega_{0}}_{L^{p}(\Omega)}.
        \end{aligned}
    \end{equation*}
    The moreover part is a consequence of Morrey's inequality.
\end{proof}
An important idea is to approximate the initial vorticity by so-called compatible functions in order to work with strong solutions.
\begin{lem}[Lemma 1, \cite{nussenzveig}]
    Let $\omega\in L^p(\Omega)$ for $p>1$. Then there exists a sequence of compatible functions $\set{\omega_n}$, i.e., $\{\omega_{n}\}\subset H^{1}(\Omega)\cap L^{\infty}(\Omega)$ with $u_{n}=K_{\Omega}(\omega_{n})\in W$, that converges strongly to $\omega$ in $L^p(\Omega)$.
\end{lem}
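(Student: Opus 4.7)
The plan is to construct each $\omega_n$ as a smooth interior approximation plus a thin boundary-layer correction whose trace is determined by a Banach fixed-point argument. Using the vorticity form of the Navier condition \eqref{navier-vorticity}, the requirement $u_n = K_\Omega(\omega_n) \in W$ is equivalent to the scalar identity $\omega_n|_{\partial\Omega} = (2\kappa - \alpha)(u_n \cdot \tau)|_{\partial\Omega}$, so it suffices to arrange this single boundary equation together with $\omega_n \in H^1(\Omega) \cap L^\infty(\Omega)$.

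First, I would choose $\tilde\omega_n \in C_c^\infty(\Omega)$ with $\tilde\omega_n \to \omega$ strongly in $L^p(\Omega)$ (standard density); these have vanishing trace but generically fail compatibility. To correct the boundary behaviour, fix a tubular neighbourhood of $\partial\Omega$ and a cutoff $\chi_\delta(x) := \chi(d(x)/\delta)$ with $\chi \equiv 1$ near $0$ and $\supp\chi \subset [0,1]$, where $d$ is the distance to $\partial\Omega$. For $b \in L^\infty(\partial\Omega)$, define the localised extension
\begin{equation*}
    E_\delta(b) := \chi_\delta \cdot H_\Omega(b),
\end{equation*}
where $H_\Omega(b)$ denotes the harmonic extension of $b$ into $\Omega$. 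The maximum principle gives $\|H_\Omega(b)\|_{L^\infty(\Omega)} \leq \|b\|_{L^\infty(\partial\Omega)}$, and a direct computation yields
\begin{equation*}
    \norm{E_\delta(b)}_{L^p(\Omega)} \leq C \delta^{1/p} \norm{b}_{L^\infty(\partial\Omega)}.
\end{equation*}

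Set $\omega_n := \tilde\omega_n + E_{\delta_n}(b)$ with $\delta_n \to 0$ to be specified. Since $\tilde\omega_n|_{\partial\Omega} = 0$, the compatibility condition rewrites as the fixed-point equation
\begin{equation*}
    b = F_n(b) := (2\kappa - \alpha)\bigl(K_\Omega(\tilde\omega_n + E_{\delta_n}(b)) \cdot \tau\bigr)\big|_{\partial\Omega}.
\end{equation*}
Combining Calder\'on--Zygmund ($K_\Omega : L^3(\Omega) \to W^{1,3}(\Omega)$ is bounded) with Morrey's embedding $W^{1,3}(\Omega) \hookrightarrow C(\overline\Omega)$ shows that $F_n$ is affine from $L^\infty(\partial\Omega)$ into itself with linear part of operator norm at most $C \delta_n^{1/3}$. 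For $\delta_n$ small enough, $F_n$ becomes a strict contraction, and the Banach fixed-point theorem yields a unique $b_n \in L^\infty(\partial\Omega)$, with uniform bound $\norm{b_n}_{L^\infty(\partial\Omega)} \leq C$ inherited from the contraction estimate and the uniform $L^3$-bound on $\tilde\omega_n$.

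Finally, I would verify the three required properties for $\omega_n := \tilde\omega_n + E_{\delta_n}(b_n)$. Membership in $L^\infty(\Omega)$ is immediate from the maximum principle; for $H^1(\Omega)$ one bootstraps on the fixed point identity: $\omega_n \in L^\infty$ forces $K_\Omega(\omega_n) \in W^{1,q}(\Omega)$ for every $q < \infty$, so the trace $b_n$ lies in $H^{1/2}(\partial\Omega)$, hence $H_\Omega(b_n) \in H^1(\Omega)$ and thus $E_{\delta_n}(b_n) \in H^1(\Omega) \cap L^\infty(\Omega)$. Compatibility is precisely the fixed-point identity combined with \eqref{navier-vorticity}, and elliptic regularity for $-\Delta \psi_n = \omega_n$ with $\psi_n|_{\partial\Omega} = 0$ gives $u_n = \nabla^\perp \psi_n \in H^2(\Omega)$, so $u_n \in W$. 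Convergence follows from
\begin{equation*}
    \norm{\omega_n - \tilde\omega_n}_{L^p(\Omega)} \leq C \delta_n^{1/p} \norm{b_n}_{L^\infty(\partial\Omega)} \to 0.
\end{equation*}
The main obstacle is the self-referential character of compatibility: modifying $\omega_n$ also changes $u_n$ and hence the required boundary trace. This is precisely why the small layer thickness $\delta_n$ is introduced, since it simultaneously controls the $L^p$ error of the correction and produces the contraction constant needed for the fixed-point step.
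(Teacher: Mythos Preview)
The paper does not supply a proof of this lemma; it is quoted directly from \cite{nussenzveig}, so there is no in-paper argument to compare against. Your fixed-point construction is a reasonable self-contained route and the architecture is sound: reduce compatibility to the scalar trace identity via \eqref{navier-vorticity}, write $\omega_n$ as a smooth interior piece plus a thin harmonic collar correction, and close the self-referential boundary condition by a Banach contraction in $L^\infty(\partial\Omega)$. The bootstrap from $\omega_n\in L^\infty$ to $b_n\in H^{1/2}(\partial\Omega)$, and from there to $\omega_n\in H^1(\Omega)$ and $u_n\in H^2(\Omega)\cap V$, is correct.

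One point needs tightening. You invoke a ``uniform $L^3$-bound on $\tilde\omega_n$'' to obtain a uniform bound on $\norm{b_n}_{L^\infty(\partial\Omega)}$, but the lemma is stated for every $p>1$; when $1<p<3$ the sequence $\tilde\omega_n$ converges only in $L^p$ and need not be bounded in $L^3$. This is easily repaired without altering the scheme. The affine part
\begin{equation*}
F_n(0)=(2\kappa-\alpha)\bigl(K_\Omega(\tilde\omega_n)\cdot\tau\bigr)\big|_{\partial\Omega}
\end{equation*}
depends on $\tilde\omega_n$ alone and not on $\delta_n$, so set $M_n:=\norm{F_n(0)}_{L^\infty(\partial\Omega)}<\infty$ (finite because each $\tilde\omega_n$ is smooth). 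The contraction estimate still yields $\norm{b_n}_{L^\infty(\partial\Omega)}\le 2M_n$ once $C\delta_n^{1/3}<\tfrac12$, and you may then additionally choose $\delta_n$ small enough that $\delta_n^{1/p}M_n\to 0$. With this $n$-dependent layer thickness the convergence step $\norm{\omega_n-\tilde\omega_n}_{L^p(\Omega)}\to 0$ and the rest of your argument go through for all $p>1$.
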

\subsection{Compatible initial data}
For initial data which actually satisfies the Navier boundary condition in a trace sense, Clopeau et al. \cite{clopeau} showed existence of a strong solution. We mean by a strong solution that it is a weak solution and belongs at least to $C([0,T];H^1(\Omega))\cap L^2(0,T;H^2(\Omega))$.
\begin{thm}[Theorem 2.3, \cite{clopeau}]
    For $\nu>0$ and $u_{0}\in W$, there exists a unique strong solution $u\in C([0,T];W)$ to \eqref{navier-stokes} with $u(0,\cdot)=u_0$. Moreover, $\partial_{t}u\in L^{2}(0,T;V)\cap C([0,T];H)$, the associated vorticity $\omega=\curl u$ is bounded, $\omega\in C([0,T];H^{1}(\Omega))\cap L^{\infty}((0,T)\times\Omega)$ and there exists a unique pressure $p\in C([0,T];H^1(\Omega)\cap L^2_0(\Omega))$ such that the momentum equation of \eqref{navier-stokes} holds a.e.\ on $(0,T)\times\Omega$.
\end{thm}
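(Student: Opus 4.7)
The plan is to construct the strong solution by a Galerkin scheme using a basis $\{e_k\}_{k\in\N}$ of $V$ consisting of eigenfunctions of the bilinear form
\begin{equation*}
a(u,v):=\int_{\Omega}\nabla u:\nabla v\,dx-\int_{\partial\Omega}(\kappa-\alpha)\,u\cdot v\,dS,
\end{equation*}
which, thanks to Lemma \ref{boundar-identity} and the weak formulation \eqref{weak-formulation}, is the natural energy form for the Navier slip Stokes operator. Via a Korn inequality on $V$ and the trace theorem, $a(\cdot,\cdot)+\lambda\|\cdot\|_{L^2}^2$ is coercive on $V$ for $\lambda$ large enough, so the usual spectral theory produces a complete orthonormal basis of $H$ lying in $V$, and elliptic regularity (granted by the Agmon--Douglis--Nirenberg theory alluded to in the introduction) ensures $e_k\in W$. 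Given $u_0\in W$, a finite-dimensional ODE system delivers the Galerkin approximations $u_n$.

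The first a priori estimate is the energy bound of Theorem \ref{energybound}. To reach strong-solution regularity, I would test the Galerkin equation with $\partial_t u_n$; using the symmetry of $a$ this yields
\begin{equation*}
\int_\Omega |\partial_t u_n|^2\,dx+\frac{\nu}{2}\frac{d}{dt}a(u_n,u_n)=-\int_\Omega (u_n\cdot\nabla u_n)\cdot\partial_t u_n\,dx.
\end{equation*}
In two dimensions, the Ladyzhenskaya inequality together with $u_n\in V$ gives $\|u_n\cdot\nabla u_n\|_{L^2}\lesssim \|u_n\|_{L^2}^{1/2}\|\nabla u_n\|_{L^2}\|u_n\|_{H^2}^{1/2}$, which after Young's inequality is absorbed by $\tfrac12\|\partial_t u_n\|_{L^2}^2$ plus a term controllable by $a(u_n,u_n)^{2}$; Gronwall then yields $u_n\in L^\infty(0,T;V)$ and $\partial_t u_n\in L^2(0,T;H)$. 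Reinterpreting the momentum equation as a Stokes problem with Navier boundary condition and right-hand side $-\partial_t u_n-(u_n\cdot\nabla)u_n$, the elliptic $H^2$-estimate promised by the Agmon--Douglis--Nirenberg framework upgrades this to $u_n\in L^2(0,T;H^2)$. Passing to a weak limit along a subsequence gives the claimed strong solution $u$; uniqueness is standard by the $L^2$-energy identity for the difference of two strong solutions in 2D.

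For the remaining claims, the vorticity equation \eqref{navier-stokes-vorticity} has the form of a linear parabolic equation for $\omega$ with Dirichlet boundary data $(2\kappa-\alpha)\,u\cdot\tau$; since $u\in L^\infty(0,T;W^{1,p})\hookrightarrow L^\infty((0,T)\times\overline\Omega)$ by Corollary \ref{uniform-velocity-corollary}, the parabolic maximum principle gives $\omega\in L^\infty((0,T)\times\Omega)$, while differentiating the vorticity equation and repeating the energy argument yields $\omega\in C([0,T];H^1(\Omega))$. The pressure is reconstructed a.e.\ in time via de Rham's theorem, applied to $-\partial_t u-(u\cdot\nabla)u+\nu\Delta u$, and its $C([0,T];H^1\cap L^2_0)$ regularity is inherited from the regularity of the other terms in the momentum equation. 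The main obstacle, as in the rest of the paper, is the $H^2$-elliptic estimate for the Laplacian under the Navier slip condition: one must verify the complementing (Lopatinskii) condition for this boundary value problem so that the Agmon--Douglis--Nirenberg machinery applies and the bound $\|u\|_{H^2}\lesssim\|\Delta u\|_{L^2}+\|u\|_{H^1}$ is available uniformly in $n$; everything else reduces to now-standard energy manipulations once that estimate is in hand.
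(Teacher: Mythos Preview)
The paper does not prove this theorem at all: it is quoted verbatim as Theorem~2.3 of \cite{clopeau} and used as a black box. There is therefore no proof in the paper to compare your plan against; your sketch is a reconstruction of how the Clopeau--Mikeli\'c--Robert argument presumably proceeds, and it is broadly reasonable (Galerkin in a Navier-slip eigenbasis, $\partial_t u$-testing for the $L^\infty_t V \cap L^2_t H^2$ bound, maximum principle for the vorticity, de~Rham for the pressure).

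One point of caution within the logic of \emph{this} paper: when you justify $u\in L^\infty((0,T)\times\overline\Omega)$ for the maximum-principle step by invoking Corollary~\ref{uniform-velocity-corollary}, you are being circular. That corollary rests on the uniform vorticity bound \eqref{uniform-vorticity-bound} from \cite{nussenzveig}, whose proof proceeds by approximation with compatible data and therefore \emph{uses} the very theorem you are trying to establish. The fix is immediate---once your Galerkin argument has produced $u\in C([0,T];H^2(\Omega))$, the two-dimensional Sobolev embedding $H^2\hookrightarrow C(\overline\Omega)$ gives the needed $L^\infty$ bound on $u$ directly---but you should not route it through Corollary~\ref{uniform-velocity-corollary}. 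A second, smaller wrinkle: the $L^\infty$ bound on $\omega$ via the parabolic maximum principle also needs $\omega_0\in L^\infty(\Omega)$, which does not follow from $u_0\in W$ alone in two dimensions; in \cite{clopeau} (and in the compatible data used later in this paper) this is an explicit additional assumption on the initial vorticity.
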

For both results, Theorem \ref{strong-convergence-vorticity} and Theorem \ref{theorem-strong-solutions}, we utilize a similar strategy. We approximate $\omega_{0}\in L^{p}(\Omega)$ by a sequence of compatible functions $\omega_{0,n}$ and consider the associated strong solutions $u_n$. To show Theorem \ref{theorem-strong-solutions}, we derive the corresponding uniform bound for $u_n$ (Lemma \ref{riesz-estimate}, Lemma \ref{pressure-lemma} and Proposition \ref{approximate-convergence}) and derive the additional regularity for $u$ by uniqueness of weak limits. For Theorem \ref{strong-convergence-vorticity}, we need to show the analogue of \cite[Lemma 3.3]{wiedemann}. First, we show it for $\omega_n=\curl u_n$ by improving the regularity of $\omega_n$ (Lemma \ref{additional-regularity}). To prove that the inequality \eqref{lemma 3.3} also holds for the limit $\omega$, we show strong convergence of $\omega_n$ to $\omega$ (Proposition \ref{approximate-convergence}).
\begin{lem}\label{riesz-estimate}
    Fix $\nu>0$ and let $\omega_{0}\in H^{1}(\Omega)\cap L^{\infty}(\Omega)$ with $u_{0}=K_{\Omega}(\omega_{0})\in W$. Then we have for the unique solution $u$ of \eqref{navier-stokes} with $u(0,\cdot)=K_{\Omega}(\omega_{0})$ the elliptic $H^2$-estimate
    \begin{equation*}
        \norm{u}_{H^2(\Omega)}\leq C\left(\norm{\Delta u}_{L^2(\Omega)}+\norm{u}_{L^2(\Omega)}\right),
    \end{equation*}
    where the constant $C>0$ only depends on the domain $\Omega$ and on the friction coefficient $\alpha$ in $C^1(\partial\Omega)$.
\end{lem}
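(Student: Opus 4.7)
The plan is to recast the claim as a pure elliptic a priori estimate for the Laplacian subject to the Navier boundary conditions, and then invoke the Agmon--Douglis--Nirenberg (ADN) theory collected in Appendix B. By the Clopeau et al.\ strong-solution theorem, $u\in C([0,T];W)$, so I may fix a time $t$ and work with $u(t,\cdot)\in W$. By Lemma \ref{boundar-identity}, such a $u$ satisfies the two scalar boundary conditions
\begin{equation*}
    B_1 u := u\cdot n = 0, \qquad B_2 u := ((n\cdot\nabla)u)\cdot\tau + (\alpha-\kappa)(u\cdot\tau) = 0 \quad \text{on } \partial\Omega,
\end{equation*}
of orders $0$ and $1$ respectively, in addition to the interior identity $\Delta u=\Delta u$. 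The strategy is to view $u$ as the solution of the boundary value problem $(\Delta;B_1,B_2)$ with data $(\Delta u,0,0)$ and apply the ADN a priori estimate to it.

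Next I would verify the hypotheses of Definition \ref{elliptic-problem}. Interior ellipticity is immediate, since the principal symbol of $\Delta$ acting componentwise is $|\xi|^2 I_2$ for $\xi\in\R^2\setminus\{0\}$. The key structural check is the Lopatinski--Shapiro (complementing) condition at an arbitrary boundary point $x_0\in\partial\Omega$. After rotating coordinates so that $n(x_0)=-e_2$ and $\tau(x_0)=e_1$ and freezing coefficients at $x_0$, the bounded solutions of $\Delta v=0$ on $\{x_2>0\}$ with tangential frequency $\xi_1\neq 0$ take the form $v(x)=a\,e^{i\xi_1 x_1}e^{-|\xi_1| x_2}$ with $a\in\C^2$. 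Evaluating the principal parts of the boundary operators at $x_2=0$ gives $B_1 v=-a_2$ and $(B_2)_{\mathrm{princ}}v=-\partial_2 v_1|_{x_2=0}=|\xi_1|\,a_1$; setting both to zero forces $a=0$, so the complementing condition holds and $(\Delta;B_1,B_2)$ is ADN-elliptic.

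The desired estimate then follows from the ADN a priori bound of Appendix B, which in our setting reads
\begin{equation*}
    \norm{u}_{H^2(\Omega)} \leq C\left(\norm{\Delta u}_{L^2(\Omega)} + \norm{B_1 u}_{H^{3/2}(\partial\Omega)} + \norm{B_2 u}_{H^{1/2}(\partial\Omega)} + \norm{u}_{L^2(\Omega)}\right).
\end{equation*}
Since $u(t,\cdot)\in W$ the two boundary terms vanish, leaving exactly the claimed inequality; the constant inherits its dependence on $\alpha$ through the zeroth-order coefficient $\alpha-\kappa\in C^1(\partial\Omega)$ appearing in $B_2$.

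The main obstacle is the Lopatinski--Shapiro verification, and here the reformulation in Lemma \ref{boundar-identity} is essential: in its original form $2(Du)_S n\cdot\tau+\alpha u\cdot\tau=0$, the Navier condition contains tangential derivatives of $u$ that entangle the two components on the half-space and obscure the symbol calculation, whereas the oblique-derivative form isolates the normal derivative of the tangential component and pairs cleanly with $u\cdot n=0$ to uniquely determine the amplitude $a$.
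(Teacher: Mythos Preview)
Your proposal is correct and follows essentially the same approach as the paper: both cast the problem as $(\Delta; B_1, B_2)$ with the boundary operators in the oblique form of Lemma~\ref{boundar-identity}, verify ADN-ellipticity, and invoke Theorem~\ref{elliptic-theorem} with the boundary data set to zero. The only cosmetic difference is that the paper checks the complementing condition via the algebraic formulation of Definition~\ref{complenenting-condition} (computing $B^p L'$ modulo $M^+$ with weights $r_1=-2$, $r_2=-1$), whereas you use the equivalent half-space ODE formulation; both yield the same conclusion.
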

\begin{proof}
    The idea is to use Theorem \ref{elliptic-theorem}, so we need to check whether the Laplacian with Navier boundary conditions is an elliptic boundary value problem in the sense of Definition \ref{elliptic-problem}. First, we need to write it as a boundary value problem of the form \eqref{elliptic-system}. The Laplacian corresponds to the elliptic operator 
    \begin{equation*}
        L(x,D)=I_2\left(D^{(2,0)}+D^{(0,2)}\right)
    \end{equation*}
    and the boundary operator (see Lemma~\ref{boundar-identity})
    \begin{equation*}
        B(x,D)= \left(\begin{array}{c}
            n(x)^T \\
            (\alpha(x)-\kappa(x))\tau^T(x)
        \end{array}\right)D^{(0,0)}+\left(\begin{array}{c}
            0 \\
            n_1(x)\tau^T(x)
        \end{array}\right)D^{(1,0)}+\left(\begin{array}{c}
            0 \\
            n_2(x)\tau^T(x)
        \end{array}\right)D^{(0,1)}
    \end{equation*}
    with $g\equiv0$ encodes the Navier boundary conditions in the form of \eqref{elliptic-boundary}.\\ 
    Next, we verify each condition of Definition \ref{elliptic-problem}:
    \begin{enumerate}
        \item[(i)] For ADN-ellipticity, we choose the weights $s_1=s_2=0$ and $t_1=t_2=2$. Obviously, conditions ($\alpha$) and ($\beta$) are satisfied. For ($\gamma$), notice that the principal symbol coincides with the operator itself, $L^p(x,D)=L(x,D)$, and therefore we have that
        \begin{equation*}
            \det L^p(x,\xi)= (\xi_1^2+\xi_2^2)^2\neq 0 \text{ for }\xi\neq0.
        \end{equation*}
        \item[(ii)] The Laplace operator is obviously a uniformly elliptic operator of order $m=2$.
        \item[(iii)] Let $\xi,\xi'\in\R^2$ be linearly independent and $\sigma\in\C$. The polynomial
        \begin{equation*}
            \sigma\mapsto\det L^p(x,\xi+\sigma\xi')=\left(\abs{\xi}^2+2\sigma\xi\cdot\xi'+\sigma^2\abs{\xi'}^2\right)^2
        \end{equation*}
        has two roots, counted multiplicity, with positive imaginary part. Indeed, this (double) root is
				\begin{equation}\label{root}
				\frac{-\xi\cdot\xi'+\sqrt{(\xi\cdot\xi')^2-|\xi|^2|\xi'|^2}}{|\xi'|^2},
				\end{equation}
				and the discriminant is strictly negative because the Cauchy--Schwarz inequality holds strictly thanks to the linear independence of $\xi$ and $\xi'$.
				Thus, the operator $L(x,D)$ is regular elliptic.
        \item[(iv)] We start by choosing the weights $r_1=-2$ and $r_2=-1$. The principal part of $B(x,D)$ is then given by
        \begin{equation*}
            B^p(x,D)=\left(\begin{array}{c}
            n(x)^T \\
            0
        \end{array}\right)D^{(0,0)}+\left(\begin{array}{c}
            0 \\
            n_1(x)\tau^T(x)
        \end{array}\right)D^{(1,0)}+\left(\begin{array}{c}
            0 \\
            n_2(x)\tau^T(x)
        \end{array}\right)D^{(0,1)},
        \end{equation*}
        and is therefore independent of $\alpha$. Next, fix a point $x\in\partial\Omega$ and let $\xi\in\R^2\backslash\set{0}$ be any vector orthogonal to $n(x)$. Recall that we need to check whether the rows of $B^p(x,\xi+\sigma n)L'(x,\xi+\sigma n)$ are linearly independent modulo $M^+(x,\xi,\sigma)$. The root with positive imaginary part of $L^p(x,\xi+\sigma n)$ is $\sigma=i\abs{\xi}$ with multiplicity 2 (this is~\eqref{root} with $\xi'=n$) and therefore we have
        \begin{equation*}
            M^+(x,\xi,\sigma)=(\sigma-i\abs{\xi})^2.
        \end{equation*}
        The term $\xi+\sigma n$ never vanishes, so the adjugate matrix $L'(x,\xi+\sigma n)$ is given by
        \begin{equation*}
            \begin{aligned}
                L'(x,\xi+\sigma n)=&\det L^p(x,\xi+\sigma n)(L^p(x,\xi+\sigma n))^{-1}\\
                =&(\abs{\xi}^2+\sigma^2)^2\frac{1}{\abs{\xi}^2+\sigma^2}I_2\\
                =&(\abs{\xi}^2+\sigma^2)I_2.
            \end{aligned}
        \end{equation*}
        We are now in a position to verify the complementing condition. The matrix product is
        \begin{equation*}
            \begin{aligned}
                &B^p(x,\xi+\sigma n)L'(x,\xi+\sigma n)\\
                =&(\abs{\xi}^2+\sigma^2)\left(\left(\begin{array}{c}
            n^T \\
            0
        \end{array}\right)+\left(\begin{array}{c}
            0 \\
            n_1\tau^T
        \end{array}\right)(\xi_1+\sigma n_1)+\left(\begin{array}{c}
            0 \\
            n_2\tau^T
        \end{array}\right)(\xi_2+\sigma n_2)\right)\\
        =&(\abs{\xi}^2+\sigma^2)\left(\begin{array}{c}
            n^T \\
            (\xi_1 n_1+\xi_2 n_2)\tau^T+\sigma(n_1^2+n_2^2)\tau^T
            
        \end{array}\right)\\
        =&(\sigma-i\abs{\xi})(\sigma+i\abs{\xi})\left(\begin{array}{c}
            n^T \\
            \sigma\tau^T
        \end{array}\right),\\
            \end{aligned}
        \end{equation*}
        so we need to check whether the rows $(\sigma-i\abs{\xi})(\sigma+i\abs{\xi})n$ and $(\sigma-i\abs{\xi})(\sigma+i\abs{\xi})\sigma\tau$ are linearly independent modulo $(\sigma-i\abs{\xi})^2$. Assume that $C_1,C_2\in\C$ satisfy 
        \begin{equation*}
            (\sigma-i\abs{\xi})(\sigma+i\abs{\xi})(C_1 n+C_2\sigma\tau)\equiv0 \text{ mod }(\sigma-i\abs{\xi})^2.
        \end{equation*}
        The particular choice $\sigma=i\abs{\xi}$ yields
        \begin{equation*}
            C_1 n+C_2i\abs{\xi}\tau =0.
        \end{equation*}
        Since $n$ and $\tau$ are linearly independent, this can only be the case if 
        \begin{equation*}
            C_1=C_2=0.
        \end{equation*}
        Hence the rows of $B^p(x,\xi+\sigma n)L'(x,\xi+\sigma n)$ are linearly independent modulo \\$M^+(x,\xi,\sigma)$, so $L$ and $B$ satisfy the complementing condition.
        \end{enumerate}
    We have shown that our boundary value problem is elliptic and we can therefore use Theorem \ref{elliptic-theorem} for $q=0$ (in our setting $t'=2$ and $r'=0$). Notice that the coefficients of $L$ are constant in $x$, thus sufficiently regular, and the coefficients of $B$ only depend on $n,\tau,\kappa$ and $\alpha$, which all belong at least to $C^2(\partial\Omega)$. Finally, estimate \eqref{elliptic-estimate} yields
    \begin{equation*}
        \norm{u}_{H^2(\Omega)}\leq C\left(\norm{\Delta u}_{L^2(\Omega)}+\norm{u}_{L^2(\Omega)}\right),
    \end{equation*}
    with constant $C>0$ only dependent on $\Omega$ and $\alpha$ (in $C^1(\partial\Omega)$, as $r_2=-1$).
\end{proof}
The subsequent lemma is a simple consequence of elliptic regularity for the Dirichlet problem.
\begin{lem}\label{additional-regularity}
Fix $\nu>0$ and let $\omega_{0}\in H^{1}(\Omega)\cap L^{\infty}(\Omega)$ with $u_{0}=K_{\Omega}(\omega_{0})\in W$. Then we have for the associated vorticity $\omega = \curl u$ to the unique solution $u$ of \eqref{navier-stokes} with $u(0,\cdot)=u_{0}$ the additional regularity
    \begin{equation}
        \omega\in L^{2}(0,T;H^{2}(\Omega)).
    \end{equation}
\end{lem}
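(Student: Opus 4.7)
The plan is to read the vorticity equation $\nu\Delta\omega=\partial_t\omega+u\cdot\nabla\omega$ as a Dirichlet problem for $\omega$ at each time, with right-hand side in $L^2(0,T;L^2(\Omega))$ and prescribed trace $(2\kappa-\alpha)u\cdot\tau$ on $\partial\Omega$, and then invoke standard $H^2$-regularity for the Dirichlet Laplacian. The regularity provided by Clopeau's theorem is already strong enough, so no further PDE work is needed; the task is essentially bookkeeping of spaces.

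First, I would collect the regularity from the theorem of Clopeau et al.\ applied to the compatible initial datum $u_0\in W$: $u\in C([0,T];W)$, hence $u\in L^\infty(0,T;H^2(\Omega))$, $\partial_t u\in L^2(0,T;V)\subset L^2(0,T;H^1(\Omega))$ and $\omega=\curl u\in C([0,T];H^1(\Omega))\cap L^\infty((0,T)\times\Omega)$. From these I would verify that both terms on the right of $\nu\Delta\omega=\partial_t\omega+u\cdot\nabla\omega$ lie in $L^2(0,T;L^2(\Omega))$. Indeed, $\partial_t\omega=\curl\partial_t u\in L^2(0,T;L^2(\Omega))$; and since in two dimensions $H^2(\Omega)\hookrightarrow L^\infty(\Omega)$, we have $u\in L^\infty((0,T)\times\Omega)$, so $u\cdot\nabla\omega\in L^\infty(0,T;L^2(\Omega))\subset L^2(0,T;L^2(\Omega))$. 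Thus $\Delta\omega\in L^2(0,T;L^2(\Omega))$.

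Next, I would examine the Dirichlet trace. The boundary condition in \eqref{navier-stokes-vorticity} reads $\omega=(2\kappa-\alpha)u\cdot\tau$ on $\partial\Omega$. Since $u\in C([0,T];H^2(\Omega))$, the trace theorem gives $u|_{\partial\Omega}\in C([0,T];H^{3/2}(\partial\Omega))$, and multiplication by the smooth factor $(2\kappa-\alpha)\in C^2(\partial\Omega)$ preserves this regularity, so $\omega|_{\partial\Omega}\in C([0,T];H^{3/2}(\partial\Omega))$.

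Finally, I would apply classical elliptic regularity for the Dirichlet Laplacian on the smooth domain $\Omega$: a solution $w\in H^1(\Omega)$ of $\Delta w = f$ with $f\in L^2(\Omega)$ and $w|_{\partial\Omega}\in H^{3/2}(\partial\Omega)$ belongs to $H^2(\Omega)$ with the corresponding estimate. Applied pointwise in time to $\omega(t,\cdot)$ and integrated in time, this gives $\omega\in L^2(0,T;H^2(\Omega))$. The one point requiring a little care is the time measurability and integrability of the resulting $H^2$-norm, which follows from the explicit elliptic estimate $\|\omega(t)\|_{H^2}\lesssim\|\Delta\omega(t)\|_{L^2}+\|\omega(t)|_{\partial\Omega}\|_{H^{3/2}(\partial\Omega)}$ and the $L^2$- (respectively $L^\infty$-) integrability in $t$ of the two terms on the right established above. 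There is no substantive obstacle; the argument is genuinely a routine elliptic bootstrap enabled by Clopeau's strong-solution regularity.
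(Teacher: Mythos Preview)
Your proposal is correct and follows essentially the same route as the paper: both show $\Delta\omega\in L^2(0,T;L^2(\Omega))$ from the vorticity equation using the regularity supplied by Clopeau's strong-solution theorem, then invoke $H^2$-elliptic regularity for the Dirichlet Laplacian. The only cosmetic difference is that the paper homogenizes the boundary data by subtracting a smooth extension of $(2\kappa-\alpha)u\cdot\tau$ to obtain an $H^1_0$ function before citing Gilbarg--Trudinger, whereas you work directly with the non-homogeneous trace in $H^{3/2}(\partial\Omega)$; these are equivalent formulations of the same elliptic step.
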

\begin{proof}
     From the already established regularity, we know that the vorticity equation, 
     \begin{equation*}
         \partial_t\omega+(u\cdot\nabla)\omega=\nu\Delta\omega, 
     \end{equation*}
     holds in $L^2(0,T;H^{-1}(\Omega))$ and the left hand-side is in $L^2(0,T;L^{2}(\Omega))$. The canonical injection $\iota:L^2(\Omega)\to H^{-1}(\Omega)$ is injective and thus we also have $\Delta\omega\in L^2(0,T;L^{2}(\Omega))$. Writing $\tau,\kappa$ and $\alpha$ for their smooth extensions to $\overline{\Omega}$, we have
     \begin{equation*}
         (2\kappa-\alpha)u\cdot\tau\in C([0,T];H^2(\Omega)).
     \end{equation*} With the boundary condition in~\eqref{navier-stokes-vorticity} for the vorticity $\omega$, we thus have 
     \begin{equation}\label{vortexboundary}
         \omega-(2\kappa-\alpha)u\cdot\tau\in C([0,T];H^1_0(\Omega)).
     \end{equation}
     We can therefore use an elliptic regularity result such as \cite[Theorem 8.12]{gilbarg} to obtain $\omega\in L^{2}(0,T;H^{2}(\Omega))$, which completes the proof.
\end{proof}
For completeness, we give the following standard estimate for the pressure.
\begin{lem}\label{pressure-lemma}
    Fix $\nu>0$ and let $\omega_{0}\in H^{1}(\Omega)\cap L^{\infty}(\Omega)$ with $u_{0}=K_{\Omega}(\omega_{0})\in W$. Then we have for the associated pressure $p\in C([0,T];H^1(\Omega)\cap L^2_0(\Omega))$ to the unique solution $u$ of \eqref{navier-stokes} with $u(0,\cdot)=u_0$ the following estimate for any $t\in[0,T]$:
    \begin{equation}\label{pressure-estimate}
        \norm{\nabla p(t)}_{L^2(\Omega)}\leq\norm{(u(t)\cdot\nabla) u(t)}_{L^2(\Omega)}+\nu\norm{\nabla \omega(t)}_{L^2(\Omega)}.    
    \end{equation}
\end{lem}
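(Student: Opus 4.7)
The plan is to pair the momentum equation with $\nabla p$ in $L^2(\Omega)$ and exploit the fact that $\partial_t u$ is divergence-free and tangent to the boundary, so that it is orthogonal to gradients. The regularity provided by Clopeau--Mikeli\'{c}--Robert (namely $u\in C([0,T];W)$, $\partial_t u\in C([0,T];H)$, and $p\in C([0,T];H^1(\Omega)\cap L^2_0(\Omega))$ with the momentum equation holding a.e.) justifies all manipulations below.

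First I fix $t\in[0,T]$ and rewrite the momentum equation as
\begin{equation*}
\nabla p(t) = \nu\Delta u(t) - (u(t)\cdot\nabla)u(t) - \partial_t u(t) \quad\text{a.e.\ in }\Omega.
\end{equation*}
Taking the $L^2$-inner product with $\nabla p(t)$ yields
\begin{equation*}
\norm{\nabla p(t)}_{L^2(\Omega)}^2 = \int_\Omega\bigl(\nu\Delta u(t) - (u(t)\cdot\nabla)u(t)\bigr)\cdot\nabla p(t)\,dx - \int_\Omega \partial_t u(t)\cdot\nabla p(t)\,dx.
\end{equation*}
The last integral vanishes: integration by parts gives
\begin{equation*}
\int_\Omega \partial_t u(t)\cdot\nabla p(t)\,dx = -\int_\Omega p(t)\,\diverg(\partial_t u(t))\,dx + \int_{\partial\Omega} p(t)\,\partial_t u(t)\cdot n\,dS,
\end{equation*}
and both terms are zero because $\diverg u\equiv 0$ and $u\cdot n\equiv 0$ on $\partial\Omega$ imply $\diverg\partial_t u=0$ and $\partial_t u\cdot n=0$ on $\partial\Omega$.

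Applying Cauchy--Schwarz to the remaining term and dividing by $\norm{\nabla p(t)}_{L^2(\Omega)}$ (the estimate being trivial otherwise) gives
\begin{equation*}
\norm{\nabla p(t)}_{L^2(\Omega)} \leq \nu\norm{\Delta u(t)}_{L^2(\Omega)} + \norm{(u(t)\cdot\nabla)u(t)}_{L^2(\Omega)}.
\end{equation*}
To finish, I use the two-dimensional identity $\Delta u = \nabla^\perp\curl u = \nabla^\perp\omega$, valid for divergence-free $u$, which gives $\norm{\Delta u(t)}_{L^2(\Omega)}=\norm{\nabla^\perp\omega(t)}_{L^2(\Omega)}=\norm{\nabla\omega(t)}_{L^2(\Omega)}$, yielding exactly~\eqref{pressure-estimate}. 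No step here is particularly delicate; the only point that needs care is the justification of the boundary integration by parts, which follows immediately from $u(t)\in W\hookrightarrow V$ and $\partial_t u(t)\in H$.
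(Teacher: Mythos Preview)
Your proof is correct and reaches the same endpoint as the paper, but by a more direct route. The paper first writes down the Neumann problem for the pressure (obtained by taking the divergence of the momentum equation), tests it with $p$, and is then left with a boundary integral $\nu\int_{\partial\Omega}(\Delta u\cdot n)\,p\,dS$; this is converted back into the volume integral $\nu\int_\Omega\Delta u\cdot\nabla p\,dx$ via a Green identity for the divergence-free field $\Delta u$, after which Cauchy--Schwarz gives the bound. You bypass the Poisson problem entirely: pairing the momentum equation with $\nabla p$ and invoking the Helmholtz orthogonality $\int_\Omega\partial_t u\cdot\nabla p\,dx=0$ (from $\partial_t u(t)\in H$, which is indeed part of the Clopeau--Mikeli\'{c}--Robert regularity) yields the same estimate in one step. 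Your argument is shorter; the paper's detour has the minor advantage of displaying the elliptic problem that the pressure solves. Both ultimately rely on the same regularity input and the identity $\Delta u=\nabla^\perp\omega$.
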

\begin{proof}
    Fix an arbitrary time $t\in[0,T]$. The pressure is as usual obtained as a solution to the following Poisson problem with Neumann boundary condition, which arises by taking the divergence of the momentum equation in \eqref{navier-stokes}:
    \begin{equation*}
        \left\{
	\begin{aligned}
		&-\Delta p(t) = \diverg((u(t)\cdot\nabla)u(t))\text{ in }\Omega, \\
            &\nabla p(t)\cdot n = \nu\Delta u \cdot n- ((u(t)\cdot\nabla)u(t))\cdot n\text{ on }\partial\Omega,\\
            &\int_\Omega p(t)dx=0.
	\end{aligned}\right.
    \end{equation*}
If we multiply the Poisson equation by an arbitrary test function $\phi\in H^1(\Omega)$, we get the relation
    \begin{equation}\label{pressure-energy-estimate}
        \begin{aligned}
            \int_\Omega\nabla p(t)\cdot\nabla\phi dx&=\int_{\partial\Omega}\nabla p(t)\cdot n\phi dS- \int_\Omega\Delta p(t)\phi dx\\
            &=\int_{\partial\Omega}\left(\nu\Delta u-((u(t)\cdot\nabla)u(t))\right)\cdot n\phi dS+ \int_\Omega\diverg((u(t)\cdot\nabla)u(t))\phi dx\\
            &=\nu\int_{\partial\Omega}\Delta u\cdot n \phi dS-\int_\Omega((u(t)\cdot\nabla)u(t))\cdot\nabla\phi dx.
        \end{aligned}
    \end{equation}
    Choosing \(p(t)\in H^{1}(\Omega)\) as the test function in \eqref{pressure-energy-estimate}, we get
    \begin{equation*}
        \begin{aligned}
            \norm{\nabla p(t)}_{L^2(\Omega)}^2
            &=\nu\int_{\partial\Omega}\Delta u\cdot n p dS-\int_\Omega((u(t)\cdot\nabla)u(t))\cdot\nabla p dx\\
            &\leq\nu\abs{\int_{\partial\Omega}\Delta u\cdot n p dS}+\norm{(u(t)\cdot\nabla)u(t)}_{L^2(\Omega)}\norm{\nabla p(t)}_{L^2(\Omega)}.
        \end{aligned}
    \end{equation*}
    To control the boundary term, we observe that $\Delta u$ is weakly divergence-free, and thus we can use the following Green identity (see, for instance, \cite[Lemma 2.1]{kelliher}):
    \begin{equation*}
        \begin{aligned}
            &\nu\abs{\int_{\partial\Omega}\Delta u\cdot n p dS} = \nu\abs{\int_{\Omega}\Delta u\cdot \nabla p dx + \int_{\Omega}\underbrace{\diverg(\Delta u)}_{=0} p dx}\leq\nu\norm{\Delta u}_{L^2(\Omega)} \norm{\nabla p}_{L^2(\Omega)}. 
        \end{aligned}
    \end{equation*}
    As we work in two dimensions, we can use the identity $\Delta u=\nabla^\perp\omega$, which concludes the proof.
\end{proof}
\subsection{Existence of strong solutions}
The following proposition is the heart of our article. It will give sufficient convergence for the statement of \cite[Lemma 3.3]{wiedemann} and yield additional regularity in the limit, which guarantees the existence of strong solutions for non-compatible initial data.
\begin{prop}\label{approximate-convergence}
    Fix $\nu>0$ and some $p>2$. Let $\{\omega_{0,n}\}_{n\in\N}\subset H^{1}(\Omega)\cap L^{\infty}(\Omega)$ with $u_{0,n}=K_{\Omega}(\omega_{0,n})\in W$ be a sequence of compatible functions such that
    \begin{equation*}
        \omega_{0,n}\to\omega_0 \text{ strongly in }L^{p}(\Omega) \text{ as }n\to\infty.
    \end{equation*}
    Then there exists for the corresponding sequence of solutions $u_{n}\in C([0,T];H^{2}(\Omega))$ to \eqref{navier-stokes} with $u_{n}(0,\cdot)=K_\Omega(\omega_{0,n})$ a subsequence $n_{k}\to\infty$ such that
    \begin{equation*}
        u_{n_k}\to u \text{ strongly in }C([0,T];L^2(\Omega))\cap L^2(0,T;H^1(\Omega)) \text{ as }k\to\infty,
    \end{equation*}
    where $u \in C([0,T];H)\cap L^2(0,T;V)$ is the unique solution to \eqref{navier-stokes} with $u(0,\cdot)=K_\Omega(\omega_{0})$. Moreover, we have the additional regularity 
    \begin{equation*}
        u \in L^2(0,T;H^2(\Omega))\text{ and }\partial_tu \in L^2(0,T;L^2(\Omega)).
    \end{equation*}
\end{prop}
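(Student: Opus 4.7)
The strategy is to combine two ingredients: (a) a Cauchy-in-$n$ estimate that shows $\{u_n\}$ converges strongly, and (b) uniform bounds for the strong-solution norms so that the limit inherits the advertised regularity.

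For strong convergence, I form $w_{n,m}:=u_n-u_m$ and subtract the weak formulations \eqref{weak-formulation} for $u_n$ and $u_m$, so that $w_{n,m}$ satisfies a Navier--Stokes-like equation with the extra drift $w_{n,m}\cdot\nabla u_m$. Testing against $w_{n,m}$ itself, the advection $\int(u_n\cdot\nabla w_{n,m})\cdot w_{n,m}\,dx$ vanishes by divergence-freeness and tangency, while $\int(w_{n,m}\cdot\nabla u_m)\cdot w_{n,m}\,dx$ transforms (again by $\diverg w_{n,m}=0$ and $w_{n,m}\cdot n=0$) into $-\int(w_{n,m}\cdot\nabla w_{n,m})\cdot u_m\,dx$, which is bounded by $\|u_m\|_{L^\infty}\|w_{n,m}\|_{L^2}\|\nabla w_{n,m}\|_{L^2}$; the boundary friction $\nu\int_{\partial\Omega}(\kappa-\alpha)|w_{n,m}|^2\,dS$ is handled by the trace inequality $\|w\|_{L^2(\partial\Omega)}^2\lesssim\|w\|_{L^2}\|w\|_{H^1}$. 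Young absorbs both into $\tfrac{\nu}{2}\|\nabla w_{n,m}\|_{L^2}^2+C\|w_{n,m}\|_{L^2}^2$, using the uniform $L^\infty$-bound on $u_m$ from Corollary \ref{uniform-velocity-corollary}. Gronwall then yields
\begin{equation*}
    \|w_{n,m}(t)\|_{L^2}^2+\nu\int_0^t\|\nabla w_{n,m}\|_{L^2}^2\,ds\leq e^{CT}\|u_{0,n}-u_{0,m}\|_{L^2}^2.
\end{equation*}
Since the Biot--Savart operator is continuous $L^p\to W^{1,p}\hookrightarrow L^2$, the initial data converge in $L^2$; hence $\{u_n\}$ is Cauchy in $C([0,T];L^2)\cap L^2(0,T;H^1)$. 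Its limit $\tilde u$ is a weak solution of \eqref{navier-stokes} (strong $L^2$ convergence suffices to pass to the nonlinearity via $\int(u_n\otimes u_n):\nabla v$), and by uniqueness (Theorem \ref{energybound}) $\tilde u=u$.

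For the additional regularity I derive uniform bounds on $u_n$. Testing \eqref{weak-formulation} against $\partial_t u_n\in V$, which is legitimate by Clopeau's strong-solution theorem, gives $\|\partial_t u_n\|_{L^2(0,T;L^2)}\leq C$ uniformly; the nonlinear term is controlled by the uniform $\|u_n\|_{L^\infty((0,T)\times\Omega)}$-bound (Corollary \ref{uniform-velocity-corollary}), and the boundary term by the same trace inequality. For the $H^2$-bound I invoke Lemma \ref{riesz-estimate} together with the 2D identity $\Delta u_n=\nabla^\perp\omega_n$ to reduce matters to a uniform bound for $\|\nabla\omega_n\|_{L^2(0,T;L^2)}$. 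This I obtain by an energy estimate on the vorticity equation: because $\omega_n|_{\partial\Omega}=(2\kappa-\alpha)u_n\cdot\tau$, I subtract an extension $\psi_n(t,x):=\chi(x)(\widetilde{2\kappa-\alpha})(x)\,u_n(t,x)\cdot\tilde\tau(x)$, where $\chi\in C^\infty(\overline\Omega)$ is one near $\partial\Omega$ and $\tilde\tau,\widetilde{2\kappa-\alpha}$ are smooth extensions. Then $\omega_n-\psi_n\in H^1_0(\Omega)$ and testing the equation for $\omega_n-\psi_n$ against itself, while integrating by parts the $\nu\Delta\psi_n$ forcing so as to avoid second derivatives of $\psi_n$, produces a Gronwall inequality whose right-hand side is controlled by $\|u_n\|_{H^1}^2$, $\|u_n\|_{L^\infty}^2$ and $\|\partial_t u_n\|_{L^2}^2$, all already bounded uniformly. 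Weak compactness and lower semicontinuity of norms transfer these bounds from $u_n$ to $u$.

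The principal obstacle is designing the extension $\psi_n$ so that its time derivative obeys $\|\partial_t\psi_n\|_{L^2(\Omega)}\lesssim\|\partial_t u_n\|_{L^2(\Omega)}$: a natural choice such as the harmonic extension of $(2\kappa-\alpha)u_n\cdot\tau$ would instead require control of $\|\partial_t u_n\|_{H^{1/2}(\partial\Omega)}$, which is not available. Defining $\psi_n$ as a pointwise multiplier acting on $u_n$ bypasses this by inheriting the time regularity of $u_n$ itself, at the price of some harmless lower-order terms involving spatial derivatives of the cutoff $\chi$ and the geometric coefficients.
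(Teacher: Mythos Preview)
Your proof is correct and follows a genuinely different route from the paper's. The paper first establishes uniform-in-$n$ bounds on $\|u_n\|_{L^2(0,T;H^2)}$ and $\|\partial_t u_n\|_{L^2(0,T;L^2)}$ and then extracts strong $L^2(0,T;H^1)$ convergence via Aubin--Lions (with the triple $H^2\hookrightarrow H^1\hookrightarrow L^2$), importing the $C([0,T];L^2)$ convergence from \cite{nussenzveig}. For the vorticity $H^1$ bound the paper subtracts essentially your extension, $\overline{\omega_n}=\omega_n-u_n\cdot(2\kappa-\alpha)\tau$, but eliminates the $\partial_t u_n$ contribution from the resulting forcing by substituting the momentum equation, trading it for $\nabla p_n$ and then invoking a separate pressure estimate (Lemma~\ref{pressure-lemma}); only afterwards is $\partial_t u_n$ bounded, directly from the momentum equation and the now-available $H^2$ control. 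You invert this order: you obtain $\|\partial_t u_n\|_{L^2_tL^2_x}$ first by testing the velocity equation against $\partial_t u_n$ (legitimate since Clopeau gives $\partial_t u_n\in L^2(0,T;V)$), and may therefore keep $\partial_t\psi_n$ in the vorticity forcing, dispensing with the pressure lemma entirely. For the strong convergence your Cauchy-in-$n$ relative energy estimate on $u_n-u_m$ is more elementary than the compactness argument and in fact yields convergence of the full sequence rather than a subsequence. The paper's route, in exchange, isolates the pressure bound as a reusable lemma and keeps the $\nu$-dependence of each constant more visible.
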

\begin{proof}
     In the proof of \cite[Proposition 1]{nussenzveig}, it was already established that there exists a subsequence, which we will again label with $n$, such that
    \begin{equation*}
        u_{n}\to u \text{ strongly in }C([0,T];L^2(\Omega)) \text{ as }n\to\infty,
    \end{equation*}
    and $u\in C([0,T];H)\cap L^2(0,T;V)$ is the unique weak solution to \eqref{navier-stokes} in the sense of \eqref{weak-formulation}. For the strong convergence in $L^2(0,T;H^1(\Omega))$, we start by setting
    \begin{equation*}
        \overline{\omega_n}:=\omega_n-u_n\cdot\overline\tau\in C([0,T];H^1_0(\Omega))\cap L^2(0,T;H^2(\Omega)),
    \end{equation*}
    with $\overline{\tau} := (2\kappa-\alpha)\tau$, where $\alpha,\kappa$ and $\tau$ denote smooth extensions to $\overline{\Omega}$, and we used Lemma~\ref{additional-regularity} and~\eqref{vortexboundary}. The idea is to look at the so-called enstrophy balance for $\overline{\omega_n}$. Clearly, $\overline{\omega_n}$ is no longer a solution to the vorticity equation; however, the resulting error term is controllable. To calculate the error term, we use that $u_n$ solves \eqref{navier-stokes}:
     \begin{equation}\label{vorticity-error}
    \begin{aligned}
        \partial_t\overline{\omega_n}+u_n\cdot\nabla\overline{\omega_n}-\nu\Delta\overline{\omega_n} =& -\partial_t u_n\cdot\overline\tau-u_n\cdot\nabla\left(u_n\cdot\overline\tau\right)+\nu\Delta(u_n\cdot\overline\tau)\\
        =& \left((u_n\cdot\nabla)u_n+\nabla p_n-\nu\Delta u_n\right)\cdot\overline\tau-((u_n\cdot\nabla)u_n)\cdot\overline\tau-u_n\cdot(\nabla \overline\tau^Tu_n)\\
        &+\nu\Delta u_n\cdot\overline\tau+2\nu\text{trace}(\nabla u_n^T\nabla \overline\tau)+\nu u_n\cdot\Delta\overline\tau\\
        =&\underbrace{-u_n\cdot(\nabla \overline\tau^Tu_n)+\nabla p_n\cdot\overline\tau+2\nu\text{trace}(\nabla u_n^T\nabla \overline\tau)+\nu u_n\cdot\Delta\overline\tau}_{=:f(u_n,\nabla p_n,\overline\tau)}.
    \end{aligned}
\end{equation}

As usual, we multiply \eqref{vorticity-error} by $\overline{\omega_{n}}$ and integrate in space-time up to an arbitrary $t\in[0,T]$:
\begin{equation*}
    \int_{0}^{t}\int_{\Omega}\overline{\omega_n}\partial_{t}\overline{\omega_n}+\overline{\omega_n}(u_n\cdot\nabla\overline{\omega_n})-\nu\overline{\omega_n}\Delta{\overline\omega_n} dx ds=\int_0^t\int_\Omega f(u_n,\nabla p_n,\overline\tau)\overline{\omega_n}dxds.
    \end{equation*}
 Note that we have $\partial_{t}\overline{\omega_n} \in L^2(0,T;L^2(\Omega))$, and hence the chain rule $\frac{1}{2}\frac{d}{dt}\abs{\overline\omega_n}^2=\overline\omega_n\partial_t\overline\omega_n$ is applicable. Together with some partial integration and recalling that ${\overline\omega_n}$ is zero on the boundary, we obtain
    \begin{equation*}
        \int_{0}^{t}\int_{\Omega}\frac{1}{2}\frac{d}{dt}\abs{\overline\omega_n}^2+\nu\abs{\nabla\overline\omega_n}^2 dx ds=\int_{0}^{t}\int_{\Omega}f(u_n,\nabla p_n,\overline\tau)\overline{\omega_n} dxds.
    \end{equation*}
    With \eqref{uniform-velocity-bound}, we get
    \begin{equation*}
        \left|\int_{0}^{t}\int_{\Omega}f(u_n,\nabla p_n,\overline\tau)\overline{\omega_n} dxds\right|\leq C\left(t,\norm{\overline\tau}_{C^2(\Omega)},\norm{\omega_{0,n}}_{L^{p}(\Omega)}\right)+\left|\int_{0}^{t}\int_{\Omega}\overline{\omega_n}\nabla p_n\cdot\overline{\tau}dxds\right|.
    \end{equation*}
    We use Lemma \ref{pressure-lemma} to bound the pressure, but a bit more work is required:
    \begin{equation*}
        \begin{aligned}
            &\left|\int_{0}^{t}\int_{\Omega}\overline{\omega_n}\nabla p_n\cdot\overline{\tau}dxds\right|\\\leq&\int_{0}^{t}\norm{\overline{\omega_n}}_{L^2(\Omega)}\norm{\overline{\tau}}_{L^\infty(\Omega)}\norm{\nabla p_n}_{L^2(\Omega)}ds\\
            \leq&\int_{0}^{t}\norm{\overline{\omega_n}}_{L^2(\Omega)}\norm{\overline{\tau}}_{L^\infty(\Omega)}\left(\norm{(u_n\cdot\nabla) u_n}_{L^2(\Omega)}+\nu\norm{\nabla \omega_n}_{L^2(\Omega)}\right)ds\\
            \leq& \int_{0}^{t}\norm{\overline{\omega_n}}_{L^2(\Omega)}\norm{\overline{\tau}}_{L^\infty(\Omega)}\left(\norm{(u_n\cdot\nabla) u_n}_{L^2(\Omega)}+\nu\norm{\nabla \overline{\omega_n}}_{L^2(\Omega)}+\nu\norm{\nabla (u_n\cdot\overline{\tau})}_{L^2(\Omega)}\right)ds\\
            \leq& C\left(
            \norm{\overline\tau}_{C^1(\Omega)},\norm{\omega_{0,n}}_{L^{p}(\Omega)}\right)\int_{0}^{t}\left(1+\nu\norm{\nabla{\overline{\omega_n}}}_{L^2(\Omega)}\right)ds\\
            \leq&C\left(t,
            \norm{\overline\tau}_{C^1(\Omega)},\norm{\omega_{0,n}}_{L^{p}(\Omega)}\right)+\frac{\nu}{2}\int_{0}^{t}\norm{\nabla{\overline{\omega_n}}}_{L^2(\Omega)}^2ds.
        \end{aligned}
    \end{equation*}
    In the last line, we used Young's inequality to absorb the term $\nabla\overline{\omega_n}$ into the left hand side. In summary, we have
    \begin{equation*}
        \frac{1}{2}\int_\Omega\abs{\overline{\omega_n}(t)}^2dx+\nu\int_0^t\int_\Omega\abs{\nabla\overline{\omega_n}(t)}^2dxds\leq C\left(t,\norm{\overline\tau}_{C^2(\Omega)},\norm{\omega_{0,n}}_{L^{p}(\Omega)}\right).
    \end{equation*}
    The constant can be chosen independently of $n$, because the initial data converge.
    As the velocity is a Leray--Hopf solution, we obtain the same bound for the original vorticity
    \begin{equation}\label{lerayhopf-vorticity}
        \frac{1}{2}\int_\Omega\abs{\omega_n(t)}^2dx+\nu\int_0^t\int_\Omega\abs{\nabla\omega_n(t)}^2dxds\leq C\left(\norm{\omega_{0}}_{L^{p}(\Omega)}\right).
    \end{equation}
    Lemma \ref{riesz-estimate} allows us to upgrade \eqref{lerayhopf-vorticity} to an uniform-in-$n$ bound for $\norm{u_n}_{L^2(0,T;H^2(\Omega))}$. Indeed, since $\diverg u_n=0$, in two dimensions $\Delta u_n =\nabla^\perp\curl u_n= \nabla^\perp\omega_n$, which yields
    \begin{equation}\label{L2H2-bound}
        \begin{aligned}
            \norm{u_n}_{L^2(0,T;H^2(\Omega))}&\leq C\left(\norm{\Delta u_n}_{L^2(0,T;L^2(\Omega))}+\norm{u_n}_{L^2(0,T;L^2(\Omega))}\right)\\
            &=C\left(\norm{\nabla^\perp \omega_n}_{L^2(0,T;L^2(\Omega))}+\norm{u_n}_{L^2(0,T;L^2(\Omega))}\right)\\
            &\leq C\left(\nu^{-1/2},\norm{\omega_{0}}_{L^{p}(\Omega)}\right).
        \end{aligned}
    \end{equation}
    Next, we look at the time derivative, which is also uniformly bounded, because Lemma~\ref{pressure-lemma} allows us to estimate
    \begin{equation}\label{time-l2}
        \begin{aligned}
            \norm{\partial_t u_n}_{L^2(0,T;L^2(\Omega))}=&\norm{ -(u_n\cdot\nabla)u_n-\nabla p_n+\nu\Delta u_n}_{L^2(0,T;L^2(\Omega))}\\
            \leq &2\norm{(u_n\cdot\nabla)u_n}_{L^2(0,T;L^2(\Omega))}+2\norm{\nu\Delta u_n}_{L^2(0,T;L^2(\Omega))}\\
            &\leq C\left(\norm{\omega_{0}}_{L^{p}(\Omega)}\right).
        \end{aligned}
    \end{equation}
    As a remark, this estimate is also uniform in $\nu$.
    
    From the Aubin--Lions lemma \ref{aubin-lions} with $X_0=H^2(\Omega)$, $X=H^1(\Omega)$ and $X_1=L^{2}(\Omega)$, we obtain a subsequence $n_k\to\infty$ such that
    \begin{equation*}
        u_{n_k}\to u \text{ strongly in }L^2(0,T;H^1(\Omega)) \text{ as }k\to\infty,
    \end{equation*}
    for some $u\in C([0,T];H)\cap L^2(0,T;H^1(\Omega))$. Without loss of generality we can assume with \eqref{L2H2-bound} that the sequence also converges weakly in $L^2((0,T);H^2(\Omega))$ and thus we have $u\in L^2(0,T;H^2(\Omega))$. Finally, we show the additional time regularity. With the Banach--Alaoglu theorem and the bound \eqref{time-l2}, there exist a $g\in L^2(0,T;L^2(\Omega))$ and a subsequence, which we again label with $n_k$, such that
    \begin{equation*}
        \partial_t u_{n_k} \rightharpoonup g \text{ in }L^2(0,T;L^2(\Omega)) \text{ as }k\to\infty.
    \end{equation*}
    We want to show $g=\partial_tu$ distributionally. Fix $\psi\in C^\infty_c((0,T))$ and $\phi\in L^2(\Omega)$. For each $n_k$, we know that $u_{n_k}\in H^1((0,T);L^2(\Omega))$, and thus we have
    \begin{equation}\label{weak-time-derivative}
        \int_0^T\int_\Omega u_{n_k}(t,x)\phi(x)dx\psi'(t)dt= - \int_0^T\int_\Omega \partial_tu_{n_k}(t,x)\phi(x)dx\psi(t)dt.
    \end{equation}
    With our already established convergence, we can pass to the limit in \eqref{weak-time-derivative} 
    \begin{equation*}
        \int_0^T\int_\Omega u(t,x)\phi(x)dx\psi'(t)dt= - \int_0^T\int_\Omega g(t,x)\phi(x)dx\psi(t)dt,
    \end{equation*}
    which concludes the proof.
\end{proof}
\begin{proof}[Proof of Theorem \ref{theorem-strong-solutions}]
    Essentially, we have already shown everything in Proposition \ref{approximate-convergence}. We apply \cite[Section 5.9.2]{evans} with  $u \in L^2(0,T;H^2(\Omega))$ and $\partial_tu \in L^2(0,T;L^2(\Omega))$ to see that $u\in C([0,T];H^1(\Omega))$. It remains to show that $u$ satisfies the Navier boundary condition. Let us write the Navier boundary condition as a first order boundary operator
    \begin{equation*}
        B:H^2(\Omega)\to H^{1/2}(\partial\Omega),\quad u\mapsto 2(Du)_{S}n\cdot\tau + \alpha u\cdot \tau,
    \end{equation*}
    which is bounded and linear.
    Using the same sequence as in Proposition \ref{approximate-convergence}, we have that 
    \begin{equation*}
        u_n \rightharpoonup u \text{ in }L^2(0,T;H^2(\Omega)),
    \end{equation*}
    and hence also
    \begin{equation*}
        B(u_n) \rightharpoonup B(u) \text{ in }L^2(0,T;H^{1/2}(\partial\Omega)).
    \end{equation*}
    For each $n\in\N$, we know that $B(u_n)=0$, which implies, with uniqueness of weak limits, that $B(u)=0$, thus, the limit $u$ satisfies the Navier boundary condition. 
\end{proof}
\section{Vanishing viscosity limit for the vorticity}
Finally, we can show the analogue of \cite[Lemma 3.3]{wiedemann}, which is precisely the lemma stated below, to use \cite[Theorem 1.3]{wiedemann}.
\begin{lem}
    Fix $\nu>0$ and let $\omega^{\nu}= \curl u^\nu$ be the associated vorticity to the unique solution $u^\nu$ of \eqref{navier-stokes} with $u^{\nu}(0,\cdot)=K_\Omega(\omega_{0}^{\nu})$ for $\omega_{0}^{\nu}\in L^p(\Omega)$ with $p>2$ and $q\in[1,p)$. Then, for any nonnegative function $\phi\in C^{\infty}_{c}([0,T)\times\Omega)$, it holds that 
    \begin{equation}\label{3.6}
        0\leq \int_{0}^{T}\int_{\Omega}\abs{\omega^{\nu}}^q(\partial_{t}\phi+u^{\nu}\cdot\nabla\phi)dxdt+\int_{\Omega}\abs{\omega_{0}^{\nu}}^q\phi(0,\cdot)dx+\nu C\left(\norm{\omega_{0}^{\nu}}_{L^{p}(\Omega)}\right),
    \end{equation}
    for some constant $C>0$.
\end{lem}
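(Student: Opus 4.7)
The plan is to prove \eqref{3.6} first for strong solutions arising from compatible initial data, then pass to the limit using Proposition~\ref{approximate-convergence}. Let $\{\omega_{0,n}^{\nu}\}_n\subset H^1(\Omega)\cap L^\infty(\Omega)$ be a sequence of compatible initial vorticities converging to $\omega_0^{\nu}$ in $L^p(\Omega)$, supplied by the approximation lemma from \cite{nussenzveig} cited above. Denote by $u_n^{\nu}$ the associated strong solutions and by $\omega_n^{\nu}=\curl u_n^{\nu}$ their vorticities. Lemma~\ref{additional-regularity} yields $\omega_n^{\nu}\in L^2(0,T;H^2(\Omega))\cap C([0,T];H^1(\Omega))$, and from the vorticity equation one reads off $\partial_t\omega_n^{\nu}\in L^2(0,T;L^2(\Omega))$.

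I would then renormalize. Take a smooth convex approximation $\beta_\epsilon\in C^2(\R)$ of $s\mapsto|s|^q$ with $\beta_\epsilon''\geq 0$ and $\beta_\epsilon(s)\to|s|^q$, $\beta_\epsilon'(s)\to q|s|^{q-2}s$ pointwise (for $q\geq 2$ one may take $\beta_0(s)=|s|^q$ directly; for $q\in[1,2)$ a mollified $(s^2+\epsilon)^{q/2}$ will do). Multiplying the vorticity equation by $\phi\,\beta_\epsilon'(\omega_n^{\nu})$, integrating over $(0,T)\times\Omega$, applying the chain rule (justified by the regularity above), using $\diverg u_n^{\nu}=0$, and integrating by parts in $t$ and $x$ — with no boundary contributions, because $\phi(T,\cdot)=0$ and $\phi$ has compact spatial support in $\Omega$ — yields
\begin{align*}
0 &= \int_0^T\!\int_\Omega \beta_\epsilon(\omega_n^{\nu})(\partial_t\phi+u_n^{\nu}\cdot\nabla\phi)\,dx\,dt + \int_\Omega \beta_\epsilon(\omega_{0,n}^{\nu})\phi(0,\cdot)\,dx\\
&\quad + \nu\int_0^T\!\int_\Omega \beta_\epsilon(\omega_n^{\nu})\Delta\phi\,dx\,dt - \nu\int_0^T\!\int_\Omega \phi\,\beta_\epsilon''(\omega_n^{\nu})|\nabla\omega_n^{\nu}|^2\,dx\,dt.
\end{align*}
Because $\phi\geq 0$ and $\beta_\epsilon''\geq 0$, the last term is nonpositive and may be discarded. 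Sending $\epsilon\to 0$ via dominated convergence — the uniform $L^\infty(0,T;L^p)$-bound \eqref{uniform-vorticity-bound} providing the majorant — gives \eqref{3.6} for $\omega_n^{\nu}$, where $\nu\int_0^T\!\int_\Omega|\omega_n^{\nu}|^q\Delta\phi\,dx\,dt$ is bounded by $\nu\,C(\norm{\omega_0^{\nu}}_{L^p(\Omega)})$ uniformly in $n$ via H\"older's inequality and \eqref{uniform-vorticity-bound}.

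To conclude, I would send $n\to\infty$. Proposition~\ref{approximate-convergence} delivers $u_n^{\nu}\to u^{\nu}$ in $C([0,T];L^2(\Omega))\cap L^2(0,T;H^1(\Omega))$ along a subsequence, hence $\omega_n^{\nu}\to\omega^{\nu}$ strongly in $L^2((0,T)\times\Omega)$; interpolating with the uniform $L^\infty(0,T;L^p)$-bound upgrades this to $|\omega_n^{\nu}|^q\to|\omega^{\nu}|^q$ strongly in $L^r((0,T)\times\Omega)$ for every $r<p/q$. Combined with the uniform $L^\infty$-bound on $u_n^{\nu}$ from Corollary~\ref{uniform-velocity-corollary}, dominated convergence disposes of the convective product $|\omega_n^{\nu}|^q u_n^{\nu}$, while convergence of the initial-data term follows from $\omega_{0,n}^{\nu}\to\omega_0^{\nu}$ in $L^p(\Omega)$. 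The main obstacle is precisely the renormalization for $q\in[1,2)$: the natural test function $|\omega|^{q-2}\omega$ is singular, and the scheme cannot be implemented directly on $\omega^{\nu}$ itself, which is only known to lie in $L^\infty(0,T;L^p(\Omega))$. The compatible-approximation device together with the additional regularity from Lemma~\ref{additional-regularity} is exactly what legitimizes the chain rule, the spatial integration by parts, and the favorable sign of the dissipation term at the approximate level, before one passes to the limit.
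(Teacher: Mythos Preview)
Your proposal is correct and follows essentially the same approach as the paper: approximate by compatible initial data, renormalize the vorticity equation for the strong solutions $\omega_n^\nu$ (using the $H^2$ regularity from Lemma~\ref{additional-regularity}), discard the sign-favorable dissipation term, estimate the residual viscous term via \eqref{uniform-vorticity-bound}, and pass to the limit $n\to\infty$ using Proposition~\ref{approximate-convergence} together with the uniform velocity bound from Corollary~\ref{uniform-velocity-corollary}. The only cosmetic differences are that the paper multiplies directly by $|\omega_n^\nu|^{q-2}\omega_n^\nu\phi$ rather than passing through a smooth $\beta_\epsilon$, and invokes Vitali's theorem (uniform integrability plus a.e.\ convergence along a further subsequence) where you interpolate between $L^2$ convergence and the uniform $L^p$ bound.
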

\begin{proof}
    We approximate $\omega^{\nu}$ as in Proposition \ref{approximate-convergence} by a sequence $\omega_n^\nu$ of solutions for compatible initial data. Next, we multiply \eqref{navier-stokes-vorticity} with $\abs{\omega^{\nu}_n}^{q-2}\omega^{\nu}_n\phi$ and integrate in space:
    \begin{equation}\label{sgn-formulation}
        \int_{\Omega} \partial_{t}\omega^{\nu}_n\abs{\omega^{\nu}_n}^{q-2}\omega^{\nu}_n\phi + u^{\nu}_n\cdot\nabla \omega^{\nu}_n\abs{\omega^{\nu}_n}^{q-2}\omega^{\nu}_n\phi-\nu\Delta \omega^{\nu}_n\abs{\omega^{\nu}_n}^{q-2}\omega^{\nu}_n\phi dx=0.
    \end{equation}
    With partial integration, where we need $\omega^{\nu}_n\in L^2(0,T;H^2(\Omega))$ (Lemma \ref{additional-regularity}), we get 
    \begin{equation*}
        \frac{d}{dt}\int_{\Omega}\abs{\omega^{\nu}_n}^q\phi dx + \nu q(q-1)\int_\Omega\abs{\omega_n^\nu}^{q-2}\abs{\nabla\omega_n^\nu}^2 \phi dx = \int_{\Omega}\abs{\omega^{\nu}_n}^q\left(\partial_{t}\phi+u^{\nu}_n\cdot\nabla\phi+\nu\Delta\phi\right)dx.
    \end{equation*}
    After integrating in time and using the non-negativity of the first term on the left hand side, it follows that
    \begin{equation*}
        0\leq \int_{0}^{T}\int_{\Omega}\abs{\omega^{\nu}_n}^q\left(\partial_{t}\phi+u^{\nu}_n\cdot\nabla\phi\right)dx+\int_{\Omega}\abs{\omega_{0,n}^{\nu}}^q\phi(0,\cdot)dx+\nu\int_{0}^{T}\int_{\Omega}\abs{\omega^{\nu}_n}^q\Delta\phi dx.
    \end{equation*}
    In order to estimate the last term, we use the uniform vorticity bound \eqref{uniform-vorticity-bound}
    \begin{equation*}
        \abs{\nu\int_{0}^{T}\int_{\Omega}\abs{\omega^{\nu}_n}^q\Delta\phi dx}\lesssim\nu\norm{\omega^{\nu}_n}_{L^{\infty}(0,T;L^{q}(\Omega))}\leq \nu C\left(\norm{\omega^{\nu}_{0,n}}_{L^{q}(\Omega)}\right).
    \end{equation*}
    So we have shown \eqref{3.6} for the approximate solution $\omega^\nu_n$. Let us take the limit to see that it also holds for $\omega^\nu$. From \eqref{uniform-vorticity-bound} we know that $\abs{\omega^\nu_n}^q$ is uniformly bounded in $L^{p/q}(\Omega)$ with $\frac{p}{q}>1$ and thus it is also uniformly integrable. Together with the strong convergence in Proposition \ref{approximate-convergence}, which implies convergence almost everywhere after choosing another subsequence if necessary, we invoke Vitali's convergence theorem to get
    \begin{equation}\label{norm-convergence}
        \abs{\omega^{\nu}_n}^q\to\abs{\omega^{\nu}}^q \text{ in }L^1((0,T)\times\Omega) \text{ as }n\to\infty
    \end{equation}
    and therefore
    \begin{equation*}
        \int_{0}^{T}\int_{\Omega}\abs{\omega^{\nu}_n}^q\partial_{t}\phi dxdt\to \int_{0}^{T}\int_{\Omega}\abs{\omega^{\nu}}^q\partial_{t}\phi dxdt \text{ as }n\to\infty.
    \end{equation*}
   One argues similarly for the viscous term. The convergence of the initial term follows directly from the $L^p$ convergence of the initial data. Lastly, we need to check the convective term. We start by splitting it into the three terms
    \begin{equation*}
        \begin{aligned}
            \int_{0}^{T}\int_{\Omega}\abs{\omega^{\nu}_n}^qu^{\nu}_n\cdot\nabla\phi dxdt=&\int_{0}^{T}\int_{\Omega}\left(\abs{\omega^{\nu}_n}^q-\abs{\omega^{\nu}}^q\right)u^{\nu}_n\cdot\nabla\phi dxdt\\&+\int_{0}^{T}\int_{\Omega}\abs{\omega^{\nu}}^q\left(u^{\nu}_n - u^{\nu}\right)\cdot\nabla\phi dxdt
            +\int_{0}^{T}\int_{\Omega}\abs{\omega^{\nu}}^qu^{\nu}\cdot\nabla\phi dxdt.
        \end{aligned}
    \end{equation*}
    The first term vanishes due to the moreover part of Corollary \ref{uniform-velocity-corollary}:
    \begin{equation*}
        \begin{aligned}
            &\int_{0}^{T}\int_{\Omega}\left(\abs{\omega^{\nu}_n}^q-\abs{\omega^{\nu}}^q\right)u^{\nu}_n\cdot\nabla\phi dxdt\\&\leq \norm{\abs{\omega^{\nu}_n}^q-\abs{\omega^{\nu}}^q}_{L^1((0,T)\times\Omega)}\norm{u^{\nu}_n}_{L^\infty((0,T)\times\Omega)}\norm{\nabla\phi}_{L^\infty((0,T)\times\Omega)}\\
            &\to 0 \text{ as }n\to\infty.
        \end{aligned}
    \end{equation*}
    We have seen in the proof of Proposition \ref{approximate-convergence} that $u_n$ converges in $L^2(0,T;H^1(\Omega))$ and therefore also in $L^2(0,T;L^r(\Omega))$ for every finite $r<\infty$. Let $s'=(\frac{p}{q})'<\infty$ be the dual exponent of $s=\frac{p}{q}$, then we get for the second term
    \begin{equation*}
        \begin{aligned}
            \int_{0}^{T}\int_{\Omega}\abs{\omega^{\nu}}^q\left(u^{\nu}_n - u^{\nu}\right)\cdot\nabla\phi dxdt\leq& \int_{0}^{T}\norm{\abs{\omega^{\nu}}^q}_{L^s(\Omega)}\norm{u^{\nu}_n - u^{\nu}}_{L^{s'}(\Omega)}\norm{\nabla\phi}_{L^\infty(\Omega)}dt\\
            \leq& \norm{\omega^{\nu}}_{L^\infty(0,T;L^p(\Omega))}^q\norm{u^{\nu}_n - u^{\nu}}_{L^1(0,T;L^{s'}(\Omega))}\norm{\nabla\phi}_{L^\infty((0,T)\times\Omega)}\\
            &\to 0 \text{ as }n\to\infty,
        \end{aligned}
    \end{equation*}
    which concludes the proof.
\end{proof}
\begin{proof}[Proof of Theorem \ref{strong-convergence-vorticity}]
    Given $T\in(0,\infty)$, we pick a slightly larger time $\tilde T\in(T,\infty)$. In contrast to the assumption \eqref{local-p-enstrophies} in \cite{wiedemann}, the uniform vorticity bound \eqref{global-p-enstrophies} is not specified to $T$. Therefore, we can apply\footnote{In fact, this theorem assumes the Dirichlet boundary condition. However, the latter only serves to provide a uniform-in-$\nu$ bound for the velocities in $L^\infty(0,T;L^2(\Omega))$, which remains true under the Navier condition by virtue of Theorem~\ref{energybound} above.} \cite[Theorem 1.3]{wiedemann} for $\tilde T$ to get a sequence $\nu^k\to 0$ such that 
    \begin{equation*}
        \omega^{\nu_{k}}\to \omega \text{ strongly in } C_{loc}([0,\tilde T);L^{q}_{loc}(\Omega))\text{ as }k\to\infty \text{ for any }q\in[1,p),\text{ }
    \end{equation*}
    and $\omega$ is a weak solution of \eqref{euler-vorticity} for $\omega(0,\cdot)=\omega_{0}$. Especially, we have that 
    \begin{equation*}
        \omega^{\nu_{k}}\to \omega \text{ strongly in } C([0,T];L^{q}_{loc}(\Omega))\text{ as }k\to\infty \text{ for any }q\in[1,p).
    \end{equation*}
    Let $q\in[1,p)$ be arbitrary. Suppose the convergence in $C([0,T];L^{q}(\Omega))$ failed. Then there must exist a time $T'\in[0,T]$ and a sequence $\set{B_k}\subset \Omega$ with $\abs{B_k}\to 0$ as $k\to\infty$ such that the $L^q$-difference on $B_k$ does not vanish as $k\to\infty$, i.e., there exists a $\delta>0$ such that
           \begin{equation*}
                \sup_{t\in[0,T']}\int_{B_k}\abs{\omega^{\nu_k}(t)-\omega(t)}^qdx>\delta \text{ for all }k\in\N.
           \end{equation*}
           However, this contradicts the uniform \(L^p\)-bound for the vorticity \eqref{uniform-vorticity-bound} (since \(q<p\)):
           \begin{equation*}
               \begin{aligned}
                   \int_{B_k}\abs{\omega^{\nu_k}(t)-\omega(t)}^qdx=&\int_\Omega \mathds{1}_{B_k}\abs{\omega^{\nu_k}(t)-\omega(t)}^qdx\\
                   \leq& \abs{B_k}^{1-\frac{q}{p}}\norm{\omega^{\nu_k}(t)-\omega(t)}^q_{L^p(\Omega)}\\\leq&\abs{B_k}^{1-\frac{q}{p}}\left(\norm{\omega^{\nu_k}(t)}_{L^p(\Omega)}+\norm{\omega(t)}_{L^p(\Omega)}\right)^q\\
                   \leq&\abs{B_k}^{1-\frac{q}{p}}C\left(\norm{\omega^{\nu_k}_0}_{L^p(\Omega)},\norm{\omega_0}_{L^p(\Omega)}\right)\to0 \text{ as }k\to\infty,
               \end{aligned}
           \end{equation*}
               because the initial data converges and is therefore uniformly bounded in $\nu_k$.
\end{proof}
\appendix
\section{Results from Functional Analysis}
\begin{lem}\label{curl_estimate}
    Let $\Omega\subset\R^{2}$ be a bounded domain with a smooth boundary and $p\in(1,\infty)$. Then, for any $u\in V$ with $\curl u\in L^{p}(\Omega)$, it holds that
    \begin{equation*}
        \norm{\nabla u}_{L^{p}(\Omega)}\leq C(\Omega,p)\norm{\curl u}_{L^{p}(\Omega)}.
    \end{equation*}
\end{lem}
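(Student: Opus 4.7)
The plan is to reduce the claim to the classical $L^p$ estimate for the Dirichlet Laplacian via a stream function. Since $u \in V$ is divergence-free with $u\cdot n=0$ on $\partial\Omega$, there exists a stream function $\psi$ such that $u = \nabla^\perp \psi$ in $\Omega$. If $\Omega$ is simply connected, one can take $\psi \in W^{1,p}(\Omega) \cap W_0^{1,p}(\Omega)$ after adjusting by a constant; in the multiply connected case, $\psi$ is constant on each boundary component and one needs an additional (finite-dimensional) decomposition into harmonic tangent fields, which can be handled by a standard argument that I would cite rather than reproduce.

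A direct computation using the definition of $\nabla^\perp$ yields
\begin{equation*}
\Delta \psi \;=\; \partial_1^2 \psi + \partial_2^2 \psi \;=\; \partial_1 u_2 - \partial_2 u_1 \;=\; \curl u \;\in\; L^p(\Omega).
\end{equation*}
Thus $\psi$ solves a Poisson problem with homogeneous Dirichlet data and right-hand side in $L^p(\Omega)$. By the Calder\'on--Zygmund $L^p$ estimate for the Dirichlet Laplacian on smooth bounded domains, which is available for all $p \in (1,\infty)$, we obtain
\begin{equation*}
\norm{\psi}_{W^{2,p}(\Omega)} \;\leq\; C(\Omega,p)\,\norm{\Delta \psi}_{L^p(\Omega)} \;=\; C(\Omega,p)\,\norm{\curl u}_{L^p(\Omega)}.
\end{equation*}
Here the absorption of $\norm{\psi}_{L^p}$ on the right-hand side is legitimate because $\psi$ has vanishing boundary trace, so Poincar\'e is applicable.

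Finally, since the components of $\nabla u = \nabla(\nabla^\perp \psi)$ are controlled pointwise by $|D^2 \psi|$, we conclude
\begin{equation*}
\norm{\nabla u}_{L^p(\Omega)} \;\lesssim\; \norm{D^2 \psi}_{L^p(\Omega)} \;\leq\; C(\Omega,p)\,\norm{\curl u}_{L^p(\Omega)}.
\end{equation*}
The only genuine subtlety is the topological issue when $\Omega$ is not simply connected, which forces the harmonic-decomposition detour above; once that is dispatched, the proof is a short application of standard elliptic $L^p$ theory.
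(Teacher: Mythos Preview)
Your proof is correct and follows essentially the same route as the paper's: introduce a stream function with zero boundary trace, observe that $\Delta\psi=\curl u$, and apply the Calder\'on--Zygmund estimate for the Dirichlet Laplacian. The paper is terser (it simply cites Kelliher for the stream function in $H^2\cap H^1_0$ and does not discuss the multiply connected case or the Poincar\'e absorption explicitly), but the argument is the same.
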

\begin{proof}
    As $u$ is divergence-free, there exists a stream function $\psi\in H^2(\Omega)\cap H^1_0(\Omega)$ such that $u=\nabla^\perp\psi$. For the construction, see, for example, \cite[(3.1)]{kelliher}. With a standard Calder\'on-Zygmund estimate, we get
    \begin{equation*}
        \norm{\nabla u}_{L^{p}(\Omega)}\leq \norm{\psi}_{W^{2,p}(\Omega)}\leq C(\Omega,p)\norm{\Delta\psi}_{L^{p}(\Omega)}=C(\Omega,p)\norm{\curl u}_{L^{p}(\Omega)}.
    \end{equation*}
\end{proof}
\begin{prop}[Aubin--Lions lemma \cite{aubin-lions-cite}]\label{aubin-lions}
    Let $X_{0}\subset X\subset X_{1}$ be Banach spaces such that the embedding $X_{0}\subset X$ is compact and the embedding $X\subset X_1$ is continuous. Then
    \begin{equation*}
        \set{u\in L^{p}(0,T;X_{0}), \partial_{t}u\in L^{q}(0,T;X_{1})}
    \end{equation*}
    embeds compactly into
    \begin{itemize}
        \item $L^{p}(0,T;X)$, for $p<\infty$ and $q\in[1,\infty]$,
        \item $C([0,T];X)$, for $p=\infty$ and $q\in(1,\infty]$.
    \end{itemize}
\end{prop}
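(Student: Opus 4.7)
The plan is to reduce the compactness statement to the classical Ehrling (Lions) interpolation lemma combined with a time-translation compactness criterion of Kolmogorov--Riesz--Fr\'echet type. First I would establish Ehrling's lemma: the compact embedding $X_0\hookrightarrow X$ together with the continuous embedding $X\hookrightarrow X_1$ implies that for every $\varepsilon>0$ there exists $C_\varepsilon>0$ with
\begin{equation*}
\norm{v}_X\leq \varepsilon\norm{v}_{X_0}+C_\varepsilon\norm{v}_{X_1}\text{ for all }v\in X_0.
\end{equation*}
This follows by a standard contradiction argument: if the inequality failed for some $\varepsilon_0>0$, extract a sequence $\set{v_n}\subset X_0$ with $\norm{v_n}_X=1$ but $\varepsilon_0\norm{v_n}_{X_0}+n\norm{v_n}_{X_1}\to 0$; by compactness of $X_0\hookrightarrow X$ a subsequence converges in $X$ to some $v$ with $\norm{v}_X=1$, while continuity of $X\hookrightarrow X_1$ forces $v=0$, a contradiction.

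For the case $p<\infty$, I take a bounded sequence $\set{u_n}$ in the intermediate space, apply Ehrling pointwise in time to $u_n-u_m$, and integrate:
\begin{equation*}
\norm{u_n-u_m}_{L^p(0,T;X)}\leq \varepsilon\norm{u_n-u_m}_{L^p(0,T;X_0)}+C_\varepsilon\norm{u_n-u_m}_{L^p(0,T;X_1)}.
\end{equation*}
The first term is uniformly $O(\varepsilon)$. It remains to extract a subsequence that is Cauchy in $L^p(0,T;X_1)$. The bound on $\partial_t u_n$ in $L^q(0,T;X_1)$ yields, via H\"older,
\begin{equation*}
\norm{u_n(t+h)-u_n(t)}_{X_1}\leq h^{1/q'}\norm{\partial_t u_n}_{L^q(0,T;X_1)},
\end{equation*}
(with the obvious modification when $q=1$), giving equicontinuity in time uniformly in $n$. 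Combined with the boundedness in $L^p(0,T;X_1)$ (from $X_0\hookrightarrow X_1$) and the pre-compactness in $X_1$ of time-averages $\frac{1}{h}\int_t^{t+h}u_n(s)ds$ (these averages lie in a bounded subset of $X_0$ and hence, by the compact embedding into $X$ and then $X$ into $X_1$, in a pre-compact subset of $X_1$), the Kolmogorov--Riesz--Fr\'echet criterion provides a subsequence convergent in $L^p(0,T;X_1)$. Letting $\varepsilon\to 0$ after extraction produces a Cauchy subsequence in $L^p(0,T;X)$.

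For the case $p=\infty$, $q>1$, I switch to an Arzel\`a--Ascoli argument. The uniform $L^\infty(X_0)$ bound makes $\set{u_n(t)}$ bounded in $X_0$ for a.e.\ $t$ (and representable as a continuous curve in $X_1$ by the time derivative control), hence relatively compact in $X$ for each $t$. The H\"older estimate above in $X_1$ together with Ehrling (absorbing the $X_0$ difference with the uniform $L^\infty(X_0)$ bound) yields equicontinuity of $\set{u_n}$ in $X$ uniformly in $n$, so Arzel\`a--Ascoli concludes. The main technical hurdle is the careful verification of the Kolmogorov criterion for $p<\infty$: one must simultaneously control temporal oscillation through $\partial_t u_n$ at the $X_1$ level while exploiting pointwise regularity at the $X_0$ level, and the Ehrling estimate is precisely what knits the two scales together.
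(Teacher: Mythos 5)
The paper does not prove this proposition at all: it is quoted as a known result and attributed to the literature (the Chen--J\"ungel--Liu note on Aubin--Lions--Dubinski\u{\i} lemmas, which in turn builds on Simon's compactness theorem), so there is no in-paper argument to compare against. Your proof is the standard Simon-type argument --- Ehrling's interpolation inequality to pass from compactness at the $X_1$ level to compactness at the $X$ level, combined with a translation/averaging compactness criterion in time for $p<\infty$ and Arzel\`a--Ascoli for $p=\infty$ --- and it is correct in all essentials; this is exactly the route the cited references take. One small inaccuracy: in the contradiction argument for Ehrling's lemma, after normalizing $\norm{v_n}_X=1$ the failed inequality gives $\eps_0\norm{v_n}_{X_0}+n\norm{v_n}_{X_1}<1$, i.e.\ the quantity is bounded by $1$, not tending to $0$ as you wrote; what you actually need and do obtain is that $\set{v_n}$ is bounded in $X_0$ and $\norm{v_n}_{X_1}\leq 1/n\to0$, so the conclusion stands. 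Two further points deserve a sentence each in a complete write-up: the case $q=1$ of the translation estimate should be handled by integrating in $t$ and using Fubini rather than a pointwise H\"older bound (which degenerates there), and in the $p=\infty$ case the passage from an $L^\infty(0,T;X_0)$ bound to boundedness of $u_n(t)$ in $X_0$ for \emph{every} $t$ (for the continuous-in-$X_1$ representative) requires a short limiting argument with time averages, which you only gesture at parenthetically.
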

\section{Agmon--Douglis--Nirenberg theory}
In this section, we recall some basic notions of the relevant Agmon--Douglis--Nirenberg (ADN) theory \cite{agmon} tailored to our context primarily following \cite[Appendix D]{bochev}. Let $\Omega\subset\R^d$ be a bounded domain. We are considering an elliptic partial differential equation of the form 
\begin{equation}\label{elliptic-system}
    \left\{
	\begin{aligned}
		&L(x,D)u=f \text{ in }\Omega,\\
        &B(x,D)u=g \text{ on }\partial\Omega,
	\end{aligned}\right.
\end{equation}
for an unknown $u:\Omega\to\R^M$. The operators $L(x,D)$ and $B(x,D)$ take values in $M\times M$, $L\times M$ and their entries $(i,j)$, $(l,j)$ are scalar differential operators
\begin{equation*}
    \begin{aligned}
        L_{i,j}(x,D) =& \sum_{\abs{\alpha_{i,j}}\leq r_{i,j}} a_{\alpha_{i,j}}(x)D^{\alpha_{i,j}},\quad B_{l,j}(x,D) = \sum_{\abs{\beta_{l,j}}\leq q_{l,j}} b_{\beta_{l,j}}(x)D^{\beta_{l,j}},\\
        &i,j\in\set{1,...,M},\text{ }l\in\set{1,...,L},
    \end{aligned}
\end{equation*}
where $\alpha_{i,j},\beta_{l,j}\in \N_0^d$ are multi-indices, $r_{i,j},q_{l,j}\in \N_0$, and $a_{\alpha_{i,j}}:\Omega\to\R$, $b_{\beta_{l,j}}:\partial\Omega\to\R$ are scalar functions. In addition, we have $D^\alpha=\prod_{i=1}^d \partial_{x_i}^{\alpha_i}$ and $\abs{\alpha}= \sum_{i=1}^d \alpha_i$ for a multi-index $\alpha\in\N_0^d$.\\
For $\xi\in\R^d$, we define the \textit{symbol} $L(x,\xi), B(x,\xi)$ of $L(x,D), B(x,D)$ by 
\begin{equation*}
    \begin{aligned}
        L_{i,j}(x,\xi) =& \sum_{\abs{\alpha_{i,j}}\leq r_{i,j}} a_{\alpha_{i,j}}(x)\xi^{\alpha_{i,j}},\quad B_{l,j}(x,\xi) = \sum_{\abs{\beta_{l,j}}\leq q_{l,j}} b_{\beta_{l,j}}(x)\xi^{\beta_{l,j}},\\
        &i,j\in\set{1,...,M},\text{ }l\in\set{1,...,L},
    \end{aligned}
    \textbf{ } 
\end{equation*}
with $\xi^\alpha=\prod_{i=1}^d \xi_i^{\alpha_i}$ for a multi-index $\alpha\in\N_0^d$.
\begin{defn}\label{adn-elliptic}
    The system \eqref{elliptic-system} is called \textit{ADN-elliptic} if there exist $s,t\in\Z^M$ such that the following holds:
    \begin{enumerate}
        \item[($\alpha$)] $\deg L_{i,j}(x,\xi) \leq s_i+t_j$;
        \item[($\beta$)] $L_{i,j}(x,\xi)\equiv 0$ if $s_i+t_j<0$;
        \item[($\gamma$)] $\det L^p(x,\xi)\neq0$ for all $\xi\in\R^d\backslash\set{0}$, where $L^p$ is the \textit{principal part} of $L$ defined by
        \begin{equation*}
            L^p_{i,j}(x,D) = \sum_{\abs{\alpha_{i,j}}= s_i+t_j} a_{\alpha_{i,j}}(x)D^{\alpha_{i,j}}, \quad i,j\in\set{1,...,M}.
        \end{equation*}
    \end{enumerate}
    Moreover, $L$ is called a \textit{uniformly elliptic} operator of order $2m$, $m\in\N$, if there exists a constant $C>0$, independent of $x\in\Omega$, such that
    \begin{equation*}
        C^{-1}\abs{\xi}^{2m}\leq \abs{\det L^p(x,\xi)}\leq C\abs{\xi}^{2m} \quad\forall \xi\in\R^d,\text{ }x\in\Omega.
    \end{equation*}
\end{defn}
For a well-posed boundary value problem, it is necessary that $L=m$, which we will assume in the following. In two dimensions, we also need the following definition.
\begin{defn}
    The operator $L$ fulfills the \textit{supplementary condition} or is called \textit{regular elliptic} if, for all linearly independent vectors $\xi,\xi'\in \R^d$, among the roots of the polynomial $\C\ni\sigma\mapsto\det L^p(x,\xi+\sigma\xi')$ there are exactly $m$ with a positive imaginary part.
\end{defn}
Let us now focus on the boundary operator $B$. In the fashion of Definition \ref{adn-elliptic}, we introduce an additional weight $r\in\Z^L$ that needs to satisfy 
\begin{equation*}
    \deg B_{l,j}(x,\xi) \leq r_l+t_j,
\end{equation*}
with the convention that $B_{l,j}(x,\xi)\equiv 0$ if $r_l+t_j<0$. Analogously, we define the principal part $B^p$ of $B$ by
\begin{equation*}
    B^p_{l,j}(x,D) = \sum_{\abs{\beta_{l,j}}= r_l+t_j} b_{\beta_{l,j}}(x)D^{\beta_{l,j}},\quad l\in\set{1,...,L},\text{ }j\in\set{1,...,M}.
\end{equation*}
As a remark, we want to point out that there may be multiple valid choices for the weights $r,s,t$.\\

For the well-posedness of the boundary value problem \eqref{elliptic-system}, it is necessary that the boundary operator $B$ matches the elliptic operator $L$ in some way. In the following, we state a sufficient (and even equivalent) algebraic condition on the principal parts $L^p$ and $B^p$, called \textit{complementing (Lopatinskii-Shapiro) condition}, but first let us introduce some notation. Fix a point $x\in\partial\Omega$. Let $n$ be the unit normal vector at $x$, $\sigma_k^+(x,\xi)$ be the $m$ roots of $\det L^p(x,\xi+\sigma n)$ with positive imaginary part, introduce the polynomial 
\begin{equation*}
    M^+(x,\xi,\sigma)=\prod_{k=1}^m(\sigma-\sigma_k^+(x,\xi)),
\end{equation*}
and, lastly, define $L'$ as the adjugate matrix of $L^p$. If $L^p$ is invertible, the adjugate matrix is given by $L'=(\det L^p)(L^p)^{-1}$.
\begin{defn}\label{complenenting-condition}
    The operators $L$ and $B$ fulfill the \textit{complementing condition} if for every point $x\in\partial\Omega$ and every real nonzero vector $\xi$ orthogonal to $n(x)$ the following holds:
    The rows of the complex matrix-valued polynomial $\C\ni\sigma\mapsto B^p(x,\xi+\sigma n)L'(x,\xi+\sigma n)$ are linearly independent modulo $M^+(x,\xi,\sigma)$, i.e.,
    \begin{equation}
        \sum_{l=1}^mC_l\left(\sum_{j=1}^MB_{l,j}^p(x,\xi+\sigma n)L'_{j,k}(x,\xi+\sigma n)\right)\equiv0 \text{ mod }M^+(x,\xi,\sigma) \text{ for all }k\in\set{1,..,M},
    \end{equation}
    if and only if $C_1=...=C_m=0$.
\end{defn}
\begin{defn}\label{elliptic-problem}
    The boundary value problem \eqref{elliptic-system} is called \textit{elliptic} if:
\begin{equation*}
    \begin{aligned}
        &\text{(i) $L$ is ADN-elliptic;}\quad\quad \text{ (ii) $L$ is uniformly elliptic;}\\
        &\text{(iii) $L$ is regular elliptic;}\quad \text{(iv) $L$ and $B$ satisfy the complementing condition.}
    \end{aligned}
\end{equation*}
\end{defn}

For $q\geq 0$, we set the following product spaces
\begin{equation*}
    X_q=\prod_{j=1}^MH^{q+t_j}(\Omega),\quad Y_q=\prod_{i=1}^MH^{q-s_i}(\Omega),\quad B_q=\prod_{l=1}^mH^{q-r_l-1/2}(\partial\Omega).
\end{equation*}
A key result of ADN theory is the subsequent a priori estimate for elliptic boundary value problems. 
\begin{thm}\label{elliptic-theorem}
    Set $t'=\max t_j$ and $r'=\max(0,\max r_l+1)$. Let $q\geq r'$ and $\Omega\subset\R^d$ be a bounded domain with $C^{q+t'}$ boundary. Moreover, assume that
    \begin{equation*}
        a_{\alpha_{i,j}}\in C^{q-s_i}(\overline{\Omega}),\quad b_{\beta_{l,j}}\in C^{q-r_l}(\partial\Omega) ,\textbf{ } i,j\in\set{1,...,M},\text{ }l\in\set{1,...,L}.
    \end{equation*} If the boundary value problem \eqref{elliptic-system} is elliptic with $f\in Y_q$ and $g\in B_q$, then there exists, for every solution $u\in X_{q}$, a constant $C>0$, independent of $u$, $f$, and $g$, such that
        \begin{equation}\label{elliptic-estimate}
             \sum_{j=1}^M \norm{u_j}_{H^{q+t_j}(\Omega)}\leq C\left(\sum_{i=1}^M \norm{f_i}_{H^{q-s_i}(\Omega)}+\sum_{l=1}^m \norm{g_l}_{H^{q-r_l-1/2}(\partial\Omega)}+ \norm{u}_{L^2(\Omega)}\right).
        \end{equation}
\end{thm}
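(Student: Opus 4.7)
The plan is to follow the classical Agmon--Douglis--Nirenberg scheme: reduce the global estimate to local interior and local boundary estimates by a partition of unity, convert each local estimate to a model problem with constant coefficients on $\R^d$ or on the half-space $\R^{d-1}\times[0,\infty)$, and extract those model estimates from Fourier analysis using exactly the algebraic hypotheses (i)--(iv) collected in Definition \ref{elliptic-problem}.

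First I would cover $\overline{\Omega}$ by finitely many small open sets, each either contained in $\Omega$ (interior patches) or centered at a boundary point and straightened into a half-ball by a $C^{q+t'}$-diffeomorphism; the assumed boundary regularity guarantees the change of variables preserves the Sobolev exponents, and the principal-symbol notions of ellipticity and of the complementing condition are invariant under it. Multiplying $u$ by a subordinate partition of unity $\{\chi_k\}$ produces localized systems $L(x,D)(\chi_k u)=\chi_k f+[L,\chi_k]u$ and analogous boundary relations. The commutators $[L,\chi_k]u$ and $[B,\chi_k]u$ involve strictly lower-order derivatives of $u$, so by Ehrling-type interpolation between $L^2(\Omega)$ and $\prod_j H^{q+t_j}(\Omega)$ they can, at the end of the argument, be absorbed into the left-hand side at the cost of the $\norm{u}_{L^2(\Omega)}$ term on the right of \eqref{elliptic-estimate}.

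For an interior patch, I freeze the principal coefficients of $L^p$ at the center $x_0$: ADN-ellipticity ($\det L^p(x_0,\xi)\neq 0$ for $\xi\neq 0$) makes $(L^p(x_0,D))^{-1}$ a matrix-valued Fourier multiplier whose $(j,i)$ entry has order $-(s_i+t_j)$, and a Mikhlin-type estimate bounds $\chi_k u$ in $\prod_j H^{q+t_j}(\R^d)$ by $\chi_k f$ in $\prod_i H^{q-s_i}(\R^d)$. The perturbation from frozen to variable coefficients contributes a term whose operator norm is $O(\operatorname{diam} U_k)$ by the continuity modulus of the $a_{\alpha_{i,j}}$ (finite since $a_{\alpha_{i,j}}\in C^{q-s_i}$), hence small on sufficiently small patches and reabsorbed. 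For a boundary patch, after flattening and freezing coefficients at the boundary point, I take the partial Fourier transform in the $d-1$ tangential variables; the interior equation becomes an ODE system in the normal variable $y>0$ with parameter $\xi'\in\R^{d-1}$, whose characteristic roots are the zeros of $\det L^p(0,\xi'+\sigma n)$. Regular ellipticity guarantees exactly $m$ of them have positive imaginary part (encoded by $M^+$), so the space of decaying solutions has dimension $m$; the complementing condition is precisely the uniform invertibility, on $\abs{\xi'}=1$, of the map sending a decaying solution to its $B^p$-trace, and homogeneity in $\abs{\xi'}$ upgrades this to a Fourier-multiplier estimate of the correct order on the half-space.

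I expect the main obstacle to be this half-space estimate, where the purely algebraic complementing condition at each boundary point must be converted into a quantitative, uniform-in-$\xi'$ bound on the solution operator of the tangential-Fourier ODE boundary value problem, with the right Sobolev-scale homogeneities in $t_j$, $s_i$, and $r_l$. All remaining steps (partition of unity, freezing of coefficients, trace estimates, and absorption of commutator and lower-order terms by interpolation) are essentially bookkeeping once the half-space model problem has been solved.
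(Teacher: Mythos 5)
The paper does not actually prove Theorem \ref{elliptic-theorem}: Appendix B merely restates it, for the reader's convenience, from Agmon--Douglis--Nirenberg \cite{agmon} (in the formulation of \cite[Appendix D]{bochev}), so there is no in-paper argument to compare yours against. Your outline is nonetheless a faithful summary of the classical ADN strategy --- partition of unity, flattening and freezing of coefficients, Mikhlin-type interior estimates from $\det L^p(x_0,\xi)\neq 0$, partial tangential Fourier transform reducing the boundary patch to an ODE system whose decaying-solution space has dimension $m$ by regular ellipticity, invertibility of the trace map via the complementing condition, and absorption of commutators and coefficient perturbations by Ehrling interpolation and smallness of the patches.

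That said, as a proof it is a skeleton rather than a complete argument, and you have correctly located the missing bone: the half-space estimate. Converting the pointwise algebraic complementing condition into a uniform-in-$\xi'$ bound on the solution operator, with the correct Douglis--Nirenberg homogeneities in $t_j$, $s_i$, $r_l$ and the fractional trace norms $H^{q-r_l-1/2}(\partial\Omega)$, is precisely the technical core of \cite{agmon}; it requires constructing the explicit Poisson kernels from $M^+$ and the adjugate $L'$, and proving Calder\'on--Zygmund-type bounds for the resulting singular integrals. Two further points deserve care if you were to flesh this out: the entries of $(L^p(x_0,\xi))^{-1}$ are homogeneous of degree $-(s_i+t_j)$ and must be cut off near $\xi=0$ before Mikhlin applies, producing additional lower-order terms to absorb; and the commutators $[L,\chi_k]u$ are of order $s_i+t_j-1$, so absorbing them at the level of general $q$ requires either an induction on $q$ or interpolation with a compactness argument, not a one-line Ehrling inequality. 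None of this contradicts your plan --- it is the standard proof --- but the statement you are asked to establish is exactly the part you defer.
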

As a final remark, the term $\norm{u}_{L^2(\Omega)}$ in \eqref{elliptic-estimate} can be omitted if \eqref{elliptic-system} has a unique solution.

\end{document}